\renewcommand{\Re}{\mathop{\rm Re}\nolimits}
\renewcommand{\Im}{\mathop{\rm Im}\nolimits}
\numberwithin{equation}{section}
\newtheorem{theorem}{Theorem}
\newtheorem{proposition}{Proposition}
\newtheorem{remark}{Remark}
\begin{document}

\title{Long-time asymptotics for the integrable nonlocal nonlinear Schr\"odinger equation with step-like initial data}
\author{Ya. Rybalko$^{\dag,\ddag}$ and D. Shepelsky$^{\dag,\ddag}$\\
	\small\em {}$^\dag$ B.Verkin Institute for Low Temperature Physics and Engineering\\
	\small\em {} of the National Academy of Sciences of Ukraine\\
	\small\em {}$^\ddag$ V.Karazin Kharkiv National University}

\maketitle

\begin{abstract}
We study the Cauchy problem for 
the integrable nonlocal nonlinear Schr\"odinger (NNLS) equation
\[
iq_{t}(x,t)+q_{xx}(x,t)+2 q^{2}(x,t)\bar{q}(-x,t)=0
\]
with a step-like initial data: $q(x,0)=q_0(x)$, where 
$q_0(x)=o(1)$ as $x\to-\infty$ and  $q_0(x)=A+o(1)$ as $x\to\infty$, with
an arbitrary positive constant  $A>0$. The main aim  is to study
the long-time behavior of the solution of this  problem.
We show that the asymptotics has qualitatively different form in the quarter-planes
of the half-plane $-\infty<x<\infty$, $t>0$:  (i) for $x<0$, the solution approaches 
 a slowly decaying, modulated wave of the Zakharov-Manakov type; (ii) for $x>0$,  the solution approaches the ``modulated constant''. The main tool is the representation of the solution 
of the Cauchy problem in terms of the solution of an associated matrix Riemann-Hilbert 
(RH) problem and the consequent  asymptotic analysis of this RH problem.
\end{abstract}



\section{Introduction}
We consider the following initial value  problem for the focusing nonlocal nonlinear Schr\"odinger (NNLS) equation with a step-like initial data:
\begin{subequations}
\label{1}
\begin{align}
\label{1-a}
& iq_{t}(x,t)+q_{xx}(x,t)+2q^{2}(x,t)\bar{q}(-x,t)=0, & & x\in\mathbb{R},\,t>0,  \\
\label{1-b}
& q(x,0)=q_0(x), & &  x\in\mathbb{R}, 
\end{align}
where 
\begin{equation}
\label{1-c}
q_0(x) \to 0\ \text{ as}\  x\to -\infty\ \ \text{and}\ \   q_0(x) \to A \text{ as}\ x\to \infty
\end{equation}
sufficiently fast, with some $A>0$. 
\end{subequations}
Throughout the paper, $\bar{q}$ denotes the complex conjugate of $q$.

The nonlocal nonlinear Schr\"odinger equation in the form (\ref{1-a}) 
was introduced by M. Ablowitz and Z. Musslimani in \cite{AMP}. Although this equation is just a reduction of a member of the AKNS hierarchy \cite{AKNS}, namely, of the coupled Schr\"odinger equations
\begin{subequations}
\label{cs}
\begin{align}
\label{cs-a}
iq_t+q_{xx}+2q^2r=0,\\
\label{cs-b}
-ir_t+r_{xx}+2r^2q=0,
\end{align}
\end{subequations}
corresponding to $r(x,t)=\bar{q}(-x,t)$, the NNLS equation has recently attracted much attention  because of its distinctive physical and mathematical properties.
Indeed, this equation is invariant under the joint transformations $x\to-x$, $t\to-t$, and complex conjugation, i.e. it is parity-time (PT) symmetric and, therefore, is related to a cutting edge research area of modern physics \cite{BB, KYZ}. Particularly, due to the gauge-equivalence of the NNLS to the unconventional system of coupled Landau-Lifshitz (CLL) equations, this equation can find applications in the physics of nanomagnetic artificial materials \cite{GA}.

Because of these features of the NNLS equation and the potential applications,  other symmetry reductions of the AKNS and other hierarchies, which lead to other types of nonlocality,  began to attract considerable attention. 
Typical examples are the reverse space-time nonlocal NLS equation and the reverse time nonlocal NLS equation, the complex/real space-time Sine-Gordon equation, the complex/real reverse space-time mKdV equation \cite{AFLM17, AMN, AM17}, the nonlocal derivative NLS equation \cite{Z}, and the multidimensional nonlocal Davey-Stewartson equation \cite{AM17, F16}.

In \cite{AMN} the authors presented the Inverse Scattering Transform (IST) method to the study of the Cauchy problem for  equation (\ref{1-a}), based on a variant of the Riemann-Hilbert approach, in the case of decaying initial data and obtained the one- and two-soliton solutions. In \cite{AMFL} and \cite{Y}, a general decaying N-soliton solution of  (\ref{1-a}) were found using the  Hirota's direct method and the Riemann-Hilbert approach respectively (see also \cite{Ybook}, where the N-soliton solution of the general coupled Schr\"odinger equations 
(\ref{cs}) is presented by the Riemann-Hilbert approach). The one-, two- and three-soliton solutions are obtained via the Hirota's direct method in 
\cite{GP} whereas in \cite{CZ},  the decaying one-soliton solution is obtained in terms of a double Wronskian. 
The soliton solutions of the focusing NNLS equation (\ref{1-a}) have some specific features: particularly, they can blow up at a finite time \cite{AMFL, AMN}, and (\ref{1-a}) can simultaneously support both bright and dark soliton solutions \cite{SMMC}.

The initial value problem for  (\ref{1-a}) with the following nonzero boundary conditions: 
\begin{equation}\label{Abc}
q(x,t)\to q_{\pm}(t)=q_0e^{i(\alpha t+\theta_{\pm})},\qquad x\to\pm\infty,
\end{equation}
where $q_0>0$, $\alpha\in\mathbb{R}$, $0\leq\theta_{\pm}<2\pi$, is considered in \cite{ALM16}, 
where the IST method is developed and the soliton solutions are constructed for certain values 
of the parameters $\theta_{\pm}$ (see also \cite{AMFL}, where the general 
$N$-soliton solutions are presented).

In the present paper we assume that the solution $q(x,t)$ of  problem (\ref{1-a}-\ref{1-b}) satisfies the following boundary conditions  for all $t>0$:
\begin{subequations}
\label{2.0}
\begin{align}
\label{2-a}
& q(x,t)=o(1), & x\rightarrow-\infty,\\
\label{2-b}
& q(x,t)=A+o(1), & x\rightarrow+\infty
\end{align}
\end{subequations}
(in what follows we will make the sense of $o(1)$ more precise).
This choice of initial data and boundary values is inspired by the shock problems for the classical (local) NLS equation 
\begin{equation}
\label{NLS}
iq_{t}(x,t)+q_{xx}(x,t)+2|q(x,t)|^2q(x,t)=0,
\end{equation}
which is another (local) reduction of system (\ref{cs}), with $r(x,t)=\bar q(x,t)$.
Such problems 
have been considered since 1980s \cite{B93,  BK, BKS, AU, KK}.
Particularly, in \cite{BKS} the authors study the Cauchy problem for the NLS equation with the following initial condition:
\begin{equation}
\label{2}
q_0(x)=
\begin{cases}
0, & x\leq 0,\\
Ae^{-2iBx}, & x>0,\\
\end{cases}
\end{equation}
assuming that the solution satisfies the boundary conditions
\begin{subequations}
\label{3}
\begin{align}
\label{3-a}
& q(x,t)=o(1),& x\rightarrow-\infty,\\
\label{3-b}
& q(x,t)=q^{p}(x,t)+o(1),& x\rightarrow+\infty,
\end{align}
\end{subequations}
where $q^{p}(x,t)=Ae^{-2iBx+2i\omega t}$ with $\omega=A^2-2B^2$ is a plane wave solution of 
the NLS equation (\ref{NLS}). 
Notice that for the classical NLS, 
the both limiting functions in (\ref{3}), i.e., $q_-(x,t)\equiv 0 $
and $q_+(x,t)=q^{p}(x,t)$ are solutions of (\ref{NLS})
whereas in the case of the NNLS equation,  $q_-(x,t)\equiv 0$ is a solution,
but $q_+(x,t)\equiv A$ is not.
With this respect,  the non-zero boundary conditions (\ref{2.0}), being  the simplest shock-type boundary conditions for the NNLS equation (\ref{1-a}),
differ from  
 those  used for the local NLS equation.

The present paper aims at (i) the development of the  Riemann-Hilbert approach 
to the initial value problem (\ref{1}) with the boundary conditions (\ref{2.0}) and (ii)
the long-time asymptotic analysis of solutions to this problem using the nonlinear steepest-decent method \cite{DZ}.
The nonlinear steepest-decent method was inspired by earlier works by Manakov \cite{M} and Its \cite{I1} (see \cite{DIZ} for a comprehensive historical review) and has been put into a rigorous shape by Deift and Zhou in \cite{DZ}, with further extensions in  \cite{DVZ94, DVZ97}. The nonlinear steepest-decent method is known to be extremely efficient for the asymptotic analysis of a wide variety of initial and initial boundary value problems for integrable systems, particularly, it has been successfully applied to many initial value problems with step-like initial data, see, e.g.,  \cite{BK14, BM17, BKS, BV07, D14, KM, XFC}.

The  paper is organized as follows.  In Section 2 we present the formalism of the IST method in the form of a multiplicative RH problem suitable for the asymptotic (as $t\to\infty$) analysis. Here we emphasize specific features
of the implementation of the Riemann-Hilbert problem formalism
in our case, one of them being a singularity, of 
particular (different for different cases of initial data) type,
at the jump contour of  the RH problem.  The long-time asymptotic  analysis
of the main RH problem (and, consequently, of the solution of the Cauchy problem for 
the NNLS equation) is then presented in Section 3,
where the main result of the paper (Theorem \ref{th1}) is formulated.
Two main peculiar aspects of our asymptotic results are (i) the dependence
of the power-type decay parts of the asymptotics on the direction $x/t=const$ 
(recall that in the case of the local NLS equation (as well as for other integrable 
equations like the (local)  Korteweg-de Vries equation, 
the modified Korteweg-de Vries equation, etc.), 
the corresponding power decay is $t^{-1/2}$ independently of the direction);
(ii) the absence of a sector in the $(x,t)$ plane, with straight boundaries
$x/t=c_1$ and $x/t=c_2$, where the main term of the asymptotics 
is described in terms of modulated elliptic functions (which, again,
is typical for local integrable  nonlinear equations,
with step-like initial data, including the local NLS equation \cite{BM17, BKS}).

\section{Inverse scattering transform and the Riemann-Hilbert problem}\label{ist}

\subsection{Eigenfunctions}
Recall that the focusing NNLS equation (\ref{1-a}) is a compatibility condition of the following two linear equations (Lax pair) \cite{AKNS, ALM16}
\begin{equation}
\label{LP}
\left\{
\begin{array}{lcl}
\Phi_{x}+ik\sigma_{3}\Phi=U(x,t)\Phi\\
\Phi_{t}+2ik^{2}\sigma_{3}\Phi=V(x,t,k)\Phi\\
\end{array}
\right.
\end{equation}
where $\sigma_3=\left(\begin{smallmatrix} 1& 0\\ 0 & -1\end{smallmatrix}\right)$, $\Phi(x,t,k)$ is a $2\times2$ matrix-valued function, $k\in\mathbb{C}$ is an auxiliary (spectral) parameter, and the matrix coefficients $U(x,t)$ and $V(x,t,k)$ are given in terms of  $q(x,t)$:	
\begin{equation}
U(x,t)=\begin{pmatrix}
0& q(x,t)\\
-\bar{q}(-x,t)& 0\\
\end{pmatrix},\qquad 
V=\begin{pmatrix}
V_{11}& V_{12}\\
V_{21}& V_{22}\\
\end{pmatrix},
\end{equation}
 where $V_{11}=-V_{22}=iq(x,t)\bar{q}(-x,t)$, $V_{12}=2kq(x,t)+iq_{x}(x,t)$, and
$V_{21}=-2k\bar{q}(-x,t)+i(\bar{q}(-x,t))_{x}$. 

Introduce the  notations
\begin{equation}
U_+=
\begin{pmatrix}
0 & A\\
0 & 0
\end{pmatrix},\,
U_-=
\begin{pmatrix}
0 & 0\\
-A & 0
\end{pmatrix},\,
V_+=
\begin{pmatrix}
0 & 2kA\\
0 & 0
\end{pmatrix},\,
V_-=
\begin{pmatrix}
0 & 0\\
-2kA & 0
\end{pmatrix}.
\end{equation}
Then, assuming that there exists $q(x,t)$ satisfying (\ref{1}) and (\ref{2.0}), it follows that 
\begin{equation}
U(x,t)\rightarrow U_{\pm}\ \mbox{and}\ 
V(x,t,k)\rightarrow V_{\pm}(k)\quad \mbox{as}\  x\rightarrow\pm\infty.
\end{equation}
It is easy to see that
 the systems 
 $$
\begin{cases}
\Phi_x+ik\sigma_3\Phi=U_{+}\Phi\\
\Phi_t+2ik^2\sigma_3\Phi=V_{+}(k)\Phi
\end{cases}
$$
and 
$$
\begin{cases}
\Phi_x+ik\sigma_3\Phi=U_{-}\Phi\\
\Phi_t+2ik^2\sigma_3\Phi=V_{-}(k)\Phi
\end{cases}
$$
are compatible (cf. (2.1)). 
Particularly, they are satisfied by $\Phi_{\pm}(x,t,k)$ defined as follows:
\begin{equation}
\label{5}
\Phi_{\pm}(x,t,k)=N_{\pm}(k)e^{-(ikx+2ik^2t)\sigma_3},
\end{equation}
where 
$N_+(k)=
\begin{pmatrix}
1 & \frac{A}{2ik}\\
0 & 1
\end{pmatrix}$ and  
$N_-(k)=
\begin{pmatrix}
1 & 0\\
\frac{A}{2ik} & 1
\end{pmatrix}$.
Notice that $\Phi_{\pm}$ are chosen in such a way that $\det \Phi_{\pm}\equiv 1$,
which is convenient for the analysis that follows, particularly, when considering the uniqueness issue
in the Riemann-Hilbert problem. On the other hand, 
the singularities of  $N_{\pm}(k)$ at  $k=0$ 
will significantly affect  this analysis.
  Namely, the solution of the basic RH problem has a singularity as $k\to 0$,
i.e. at a point on the contour of the RH problem
(see (\ref{z}) and (\ref{nz}) below).

Now define  the $2\times2$-valued functions $\Psi_j(x,t,k)$, $j=1,2$, $-\infty<x<\infty$, $0\leq t<\infty$ as the solutions of the Volterra integral equations:
\begin{subequations}\label{6}
\begin{align}
\label{6-1}
& \Psi_1(x,t,k)=N_-(k)+\int^x_{-\infty}G_-(x,y,t,k)\left(U(y,t)-U_-\right)\Psi_1(y,t,k)e^{ik(x-y)\sigma_3}\,dy,\\
\label{7}
& \Psi_2(x,t,k)=N_+(k)+\int^x_{\infty}G_+(x,y,t,k)\left(U(y,t)-U_+\right)\Psi_2(y,t,k)e^{ik(x-y)\sigma_3}\,dy,
\end{align}
where $G_{\pm}(x,y,t,k)=\Phi_{\pm}(x,t,k)[\Phi_{\pm}(y,t,k)]^{-1}$.
\end{subequations}
The functions $\Psi_{j}(x,t,k)$, $j=1,2$ are the main ingredients of the basic RH problem (see (\ref{DM}) below).
The main properties of the matrices $\Psi_j(x,t,k)$ 
(following from the integral equations (\ref{6}))
are summarized in  Proposition \ref{prop1}, where we denote by $\Psi_j^{(i)}(x,t,k)$ the i-th column of $\Psi_j(x,t,k)$,  
$\mathbb{C}^{\pm}=\left\{k\in\mathbb{C}\,|\pm\Im k>0\right\}$, and 
$\overline{\mathbb{C}^{\pm}}=\left\{k\in\mathbb{C}\,|\pm\Im k\ge 0\right\}$.
\begin{proposition}
\label{prop1}
The matrices $\Psi_1(x,t,k)$ and $\Psi_2(x,t,k)$ have the following properties:
\begin{enumerate}[(i)]
\item The columns $\Psi_1^{(1)}(x,t,k)$ and $\Psi_2^{(2)}(x,t,k)$ are well-defined and analytic in $k\in\mathbb{C}^+$ and continuous in $\overline{\mathbb{C}^+}\setminus\{0\}$; moreover,
\[
\Psi_1^{(1)}(x,t,k)=
\begin{pmatrix}
1\\
0\end{pmatrix}
+O(k^{-1}) \ \text{and}\ \ \Psi_2^{(2)}(x,t,k)=
\begin{pmatrix}
0\\
1\end{pmatrix}
+O(k^{-1}) \quad \text{as}\ 
 k\rightarrow\infty, \quad  k\in\mathbb{C}^+.
\]
\item The columns $\Psi_1^{(2)}(x,t,k)$ and $\Psi_2^{(1)}(x,t,k)$ are well-defined and analytic in $k\in\mathbb{C}^-$ and continuous in $\overline{\mathbb{C}^-}$; moreover,
\[
\Psi_1^{(2)}(x,t,k)=
\begin{pmatrix}
0\\
1\end{pmatrix}
+O(k^{-1}) \ \text{and}\ \ 
\Psi_2^{(1)}(x,t,k)=
\begin{pmatrix}
1\\
0\end{pmatrix}
+O(k^{-1})\quad \text{as}\  k\rightarrow\infty,\quad k\in\mathbb{C}^-.
\]

\item The functions $\Phi_j(x,t,k)$, $j=1,2$ defined by
\begin{equation}
\label{7.5}
\Phi_j(x,t,k)=\Psi_j(x,t,k)e^{-(ikx+2ik^2t)\sigma_3},\qquad k\in\mathbb{R}\setminus \{0\},\qquad j=1,2,
\end{equation}
are the  (Jost) solutions of the Lax pair equations (\ref{LP}) satisfying 
the boundary conditions  
\begin{subequations} 
\begin{align}
&\Phi_1(x,t,k)\rightarrow\Phi_-(x,t,k),\qquad x\rightarrow-\infty,\\
&\Phi_2(x,t,k)\rightarrow\Phi_+(x,t,k),\qquad x\rightarrow\infty.
\end{align}
\end{subequations}

\item $\det\Psi_j(x,t,k)\equiv 1$,$\qquad$ $x\in\mathbb{R}$, $t\geq0$, $k\in\mathbb{R}$, $\qquad j=1,2$.

\item The  following symmetry relation holds:
\begin{equation}
\label{7.9}
\Lambda\overline{\Psi_1(-x,t,-k)}\Lambda^{-1}=\Psi_2(x,t,k),\,\,k\in\mathbb{R}\setminus\{0\},
\end{equation}
where $\Lambda=\bigl(\begin{smallmatrix}0& 1\\1 & 0\end{smallmatrix}\bigl)$.

\item As $k\to 0$,
\begin{subequations}
\label{k0}
\begin{align}
\label{k0-a}
&\Psi_1^{(1)}(x,t,k)=\frac{1}{k}
\begin{pmatrix}v_1(x,t)\\v_2(x,t)\end{pmatrix}+O(1),
& & \Psi_1^{(2)}(x,t,k)=\frac{2i}{A}\begin{pmatrix}v_1(x,t)\\ v_2(x,t)\end{pmatrix}+O(k),\\
\label{k0-b}
&\Psi_2^{(1)}(x,t,k)=-\frac{2i}{A}
\begin{pmatrix}
\overline{v_2}(-x,t)\\ \overline{v_1}(-x,t)
\end{pmatrix}+O(k),
& & \Psi_2^{(2)}(x,t,k)=
-\frac{1}{k}
\begin{pmatrix}\overline{v_2}(-x,t)\\\overline{v_1}(-x,t)
\end{pmatrix}+O(1),
\end{align}
\end{subequations}
where $v_j(x,t)$, j=1,2 solve the following system of  Volterra integral equations:
\begin{equation}
\label{vv}
\begin{cases}
v_1(x,t)=\int_{-\infty}^{x}q(y,t)v_2(y,t)\,dy,\\
v_2(x,t)=-i\frac{A}{2}-\int_{-\infty}^{x}
\overline{q(-y,t)}v_1(y,t)\,dy.
\end{cases}
\end{equation}
\end{enumerate}
\end{proposition}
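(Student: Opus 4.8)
The plan is to derive all six properties directly from the Volterra integral equations \eqref{6-1}, \eqref{7} and the structural/symmetry features of the coefficients $U$, $N_\pm$. First I would analyze the kernels: writing $G_\pm(x,y,t,k)=N_\pm(k)e^{-ik(x-y)\sigma_3}N_\pm^{-1}(k)$, one sees that the exponential factor $G_-(x,y,t,k)(U(y,t)-U_-)\Psi_1(y,t,k)e^{ik(x-y)\sigma_3}$ produces, column by column, either the bounded factor $1$ or the factor $e^{-2ik(x-y)}$ (respectively $e^{2ik(x-y)}$), and in the relevant integration range ($y<x$ for $\Psi_1$, $y>x$ for $\Psi_2$) exactly one of these is bounded for $\Im k>0$ and the other for $\Im k<0$. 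This, together with the assumed sufficiently fast decay of $U(y,t)-U_\mp$ as $y\to\mp\infty$ (made precise by the $o(1)$ hypotheses in \eqref{2.0}), gives the standard Neumann-series argument: the series converges uniformly on compact subsets, yielding analyticity of the appropriate columns in $\mathbb C^\pm$ and continuity up to the real axis. The one subtlety is the pole of $N_\pm(k)$ at $k=0$: the $\frac{A}{2ik}$ entries force us to exclude $k=0$ from the continuity statement for $\Psi_1^{(1)},\Psi_2^{(2)}$ (hence ``continuous in $\overline{\mathbb C^+}\setminus\{0\}$'') while the other pair stays continuous at $0$; this is parts (i)–(ii). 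The large-$k$ expansions follow by iterating the integral equation once and using $N_\pm(k)\to I$ and Riemann–Lebesgue-type estimates on the oscillatory integral.

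For part (iii), I would simply note that \eqref{7.5} reverses the gauge transformation built into \eqref{6}: substituting $\Phi_j=\Psi_j e^{-(ikx+2ik^2t)\sigma_3}$ into \eqref{6-1}, \eqref{7} turns them into the integral equations whose differentiation yields the Lax pair \eqref{LP} with the stated spatial boundary behaviour, since $U(y,t)-U_\mp\to 0$ and $\Phi_\mp$ solve the constant-coefficient systems. Part (iv), $\det\Psi_j\equiv 1$, follows from Abel's formula: $\Phi_j$ solves $\Phi_x=(-ik\sigma_3+U)\Phi$ with $\operatorname{tr}(-ik\sigma_3+U)=0$, so $\det\Phi_j$ is independent of $x$; evaluating at $x=\mp\infty$ gives $\det\Phi_\mp=\det N_\mp=1$, and $\det e^{-(ikx+2ik^2t)\sigma_3}=1$, hence $\det\Psi_j=1$. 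For the symmetry \eqref{7.9}, the key observation is that the map $(x,t,k)\mapsto(-x,t,-k)$ combined with complex conjugation and conjugation by $\Lambda$ sends $U(x,t)$ to $U(-x,t)$ after using $U(x,t)=\begin{pmatrix}0&q(x,t)\\-\bar q(-x,t)&0\end{pmatrix}$: one checks $\Lambda\overline{U(-x,t)}\,\Lambda^{-1}=U(x,t)$ and $\Lambda\overline{N_-(-k)}\,\Lambda^{-1}=N_+(k)$, and similarly for the limiting matrices $U_\mp$; applying these identities inside the integral equation \eqref{6-1} for $\Psi_1(-x,t,-k)$ transforms it into exactly the equation \eqref{7} characterizing $\Psi_2(x,t,k)$, and uniqueness of the Volterra solution gives \eqref{7.9}.

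The genuinely delicate part is (vi), the behaviour as $k\to0$, and I expect this to be the main obstacle. The naive difficulty is that $N_\pm(k)$ blows up like $k^{-1}$, so one cannot just set $k=0$ in the integral equations; instead I would write $\Psi_1(x,t,k)=N_-(k)M(x,t,k)$ for a new unknown $M$ and show $M$ satisfies a Volterra equation with a kernel that is regular (indeed analytic) near $k=0$, so $M$ admits a Taylor expansion $M=M_0(x,t)+kM_1(x,t)+\cdots$; the stated expansions then come from multiplying by $N_-(k)=\frac{1}{2ik}\!\begin{pmatrix}0&0\\A&0\end{pmatrix}+\begin{pmatrix}1&0\\0&1\end{pmatrix}+\cdots$ Wait—more carefully, one must track which columns of $M_0$ survive after multiplication by the singular part of $N_-$, and identify the surviving vector with $(v_1,v_2)^T$. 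Concretely, at $k=0$ the $x$-equation of the Lax pair reduces to $\Phi_x=U(x,t)\Phi$, whose solution with the correct normalization at $x=-\infty$ is built from the functions $v_j$ solving \eqref{vv}; one then matches the leading Laurent coefficient of $\Psi_1^{(1)}$ and the leading Taylor coefficient of $\Psi_1^{(2)}$ against this, getting \eqref{k0-a}. Finally \eqref{k0-b} for $\Psi_2$ is obtained from \eqref{k0-a} by applying the symmetry relation \eqref{7.9}: replacing $(x,k)$ by $(-x,-k)$, conjugating, and conjugating by $\Lambda$ turns the $\frac1k(v_1,v_2)^T$ behaviour into the $-\frac1k(\overline{v_2}(-x,t),\overline{v_1}(-x,t))^T$ behaviour, with the sign coming from $k\mapsto-k$. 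The routine-but-careful bookkeeping of orders in $k$ through these manipulations — ensuring the $O(1)$, $O(k)$ remainder estimates are uniform in $(x,t)$ on the relevant half-lines — is where the real work lies.
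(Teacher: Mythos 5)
Your treatment of items (i)--(v) is correct and essentially the paper's own argument (Neumann series for (i)--(iii), tracelessness for (iv), the symmetry $\Lambda\overline{U(-x,t)}\Lambda^{-1}=U(x,t)$ together with uniqueness of the Volterra solution for (v)), so I only comment on (vi), where there is a genuine gap --- precisely at the step you yourself flagged. The device you propose, writing $\Psi_1=N_-(k)M$ and claiming that $M$ solves a Volterra equation with a kernel regular near $k=0$, does not work: the conjugated potential is
\[
N_-^{-1}(k)\bigl(U(y,t)-U_-\bigr)N_-(k)=
\begin{pmatrix} c\,q(y,t) & q(y,t)\\ A-\overline{q(-y,t)}-c^2 q(y,t) & -c\,q(y,t)\end{pmatrix},
\qquad c=\frac{A}{2ik},
\]
which has a second-order pole at $k=0$, and indeed $M=N_-^{-1}\Psi_1$ itself has a double pole there (its $(2,1)$ entry behaves like $-\frac{Av_1}{2ik^{2}}$ once \eqref{k0-a} is known), so no Taylor expansion of $M$ exists. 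The regularity sits in the opposite place: the kernel of the original equation \eqref{6-1} is already regular at $k=0$, since
\[
G_-(x,y,t,k)=N_-(k)e^{-ik(x-y)\sigma_3}N_-^{-1}(k)=
\begin{pmatrix} e^{-ik(x-y)} & 0\\ \frac{A}{2ik}\bigl(e^{-ik(x-y)}-e^{ik(x-y)}\bigr) & e^{ik(x-y)}\end{pmatrix}
\]
and $e^{-ik(x-y)}-e^{ik(x-y)}=O(k)$; hence $\Psi_1$ inherits its $k=0$ singularity solely from the inhomogeneous term $N_-(k)$, i.e.\ a simple pole in the first column and regularity of the second column.

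Your fallback --- matching against the $k=0$ equation $\Phi_x=U\Phi$ --- points in the right direction but leaves unproved exactly the content of \eqref{k0-a}: (a) that the residue vector of $\Psi_1^{(1)}$ solves \eqref{vv} with the normalization $(0,-iA/2)^{T}$ dictated by the singular part of $N_-$, and (b) that $\Psi_1^{(2)}(x,t,0)$ equals $\frac{2i}{A}$ times that same vector; without (b) the two columns in \eqref{k0-a} would involve two unrelated vector functions and the singularity conditions (\ref{z}), (\ref{nz}) used later would not close. The paper obtains (a) and (b) by substituting the a priori expansions into \eqref{6-1}: the residue vector satisfies \eqref{vv}, while $\tilde v:=\Psi_1^{(2)}(\cdot,\cdot,0)$ satisfies the same Volterra system \eqref{2.14} but with datum $1$ in place of $-iA/2$, so uniqueness (linearity) of the Volterra solution gives $\tilde v=\frac{2i}{A}v$; your ODE matching becomes a proof only after you add this proportionality/uniqueness step together with the boundary values as $x\to-\infty$. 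Your derivation of \eqref{k0-b} from \eqref{k0-a} via the symmetry \eqref{7.9} is fine.
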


\begin{proof}
Properties (i)-(iii) follows directly from the representation of  $\Psi_j$ 
in terms of the  Neumann series associated with equations (\ref{6}).
The Neumann series  converge provided 
$\int_{-\infty}^0 |q(x,t)|dx <\infty$ and 
$\int^\infty_0 |q(x,t)-A|dx <\infty$ for all $t\ge 0$ (cf. (\ref{2.0})).
Item (iv) follows from the fact that $U$ and $V$ in (\ref{LP}) are traceless.
Item (v) follows from the corresponding symmetry 
$\Lambda\overline{U(-x,t)}\Lambda^{-1}=U(x,t)$.

Now let us  discuss Item (vi). From (\ref{6}) and the structure of singularity
of $N_\pm$ at $k=0$ it follows that, as $k\to 0$, 
\begin{subequations}
\label{vw}
\begin{align}
&\Psi_1^{(1)}(x,t,k)=\frac{1}{k}\begin{pmatrix}v_1(x,t)\\ v_2(x,t)\end{pmatrix}+O(1),
& & \Psi_1^{(2)}(x,t,k)=\begin{pmatrix}\tilde{v}_1(x,t)\\ \tilde{v}_2(x,t)\end{pmatrix}+O(k),\\
&\Psi_2^{(1)}(x,t,k)=\begin{pmatrix}\tilde{w}_1(x,t)\\ \tilde{w}_2(x,t)\end{pmatrix}+O(k),
& & \Psi_2^{(2)}(x,t,k)=\frac{1}{k}\begin{pmatrix}w_1(x,t)\\ w_2(x,t)\end{pmatrix}+O(1)
\end{align}
\end{subequations}
with some $v_j$, $\tilde v_j$, $w_j$ and $\tilde w_j$ ($j=1,2$).
Then, the symmetry relation (\ref{7.9}) implies that
\begin{equation}
\begin{pmatrix}
w_1(x,t)\\w_2(x,t)
\end{pmatrix}=
\begin{pmatrix}
-\overline{v_2}(-x,t)\\
-\overline{v_1}(-x,t)
\end{pmatrix}\quad\text{and}\quad
\begin{pmatrix}\tilde{w}_1(x,t)\\ \tilde{w}_2(x,t)
\end{pmatrix}=
\begin{pmatrix}
\overline{\tilde{v}_2}(-x,t)\\
\overline{\tilde{v}_1}(-x,t)
\end{pmatrix}.
\end{equation}
Further, substituting (\ref{vw})  into 
(\ref{6})  we conclude that
$v_j(x,t)$, $j=1,2$ satisfy  (\ref{vv}) whereas 
$\tilde{v}_j(x,t)$, $j=1,2$ solve the following system of equations
\begin{equation}\label{2.14}
\begin{cases}
\tilde{v}_1(x,t)=\int_{-\infty}^{x}q(y,t)\tilde{v}_2(y,t)\,dy,\\
\tilde{v}_2(x,t)=1-\int_{-\infty}^{x}\overline{q(-y,t)}\tilde{v}_1(y,t)
\,dy.
\end{cases}
\end{equation}
Comparing (\ref{2.14}) with (\ref{vv}), it follows that 
\begin{equation}
\begin{pmatrix}
\tilde{v}_1(x,t)\\\tilde{v}_2(x,t)
\end{pmatrix}=
\frac{2i}{A}\begin{pmatrix}
v_1(x,t)\\v_2(x,t)
\end{pmatrix}
\end{equation}
and thus (\ref{k0}) can be characterized in terms of two functions only, $v_1(x,t)$ and 
$v_2(x,t)$.
\end{proof}
\subsection{Scattering data}
Since  $\Phi_1(x,t,k)$ and $\Phi_2(x,t,k)$ 
are both  well-defined for $k\in\mathbb{R}\setminus\{0\}$ and
satisfy the both equations in the Lax pair 
(\ref{LP}),
it follows that  
\begin{equation}
\label{8}
\Phi_1(x,t,k)=\Phi_2(x,t,k)S(k),\,\,k\in\mathbb{R}\setminus\{0\},
\end{equation}
or, in terms of $\Psi_j$,
\begin{equation}
\label{9}
\Psi_1(x,t,k)=\Psi_2(x,t,k)e^{-(ikx+2ik^2t)\sigma_3}S(k)e^{(ikx+2ik^2t)\sigma_3},\qquad 
k\in\mathbb{R}\setminus\{0\},
\end{equation}
where $S(k)$ is called the scattering matrix. 

The symmetry relation (\ref{7.9})
implies that the same relation holds for the Jost solutions $\Phi_1(x,t,k)$ and $\Phi_2(x,t,k)$:
\begin{equation}\label{phi-sym}
\Lambda\overline{\Phi_1(-x,t,-\bar{k})}\Lambda^{-1}=\Phi_2(x,t,k),\qquad k\in\mathbb{R}\setminus\{0\}.
\end{equation}
In turn, this implies that the scattering matrix $S(k)$ can be written as follows (cf. \cite{AMN, RS})
\begin{equation}
S(k)=
\begin{pmatrix}
a_1(k)& -\overline{b(-k)}\\
b(k)& a_2(k)
\end{pmatrix},\qquad k\in\mathbb{R}\setminus\{0\},
\end{equation}
with some $b(k)$, $a_1(k)$, and $a_2(k)$; moreover,  $a_1(k)$ and $a_2(k)$ are well defined in  $\overline{{\mathbb C}^+}\setminus\{0\}$ and $\overline{{\mathbb C}^-}$ respectively, where they satisfy 
the symmetry relations
\begin{equation}
\label{a-sym}
\overline{a_1(-\bar{k})} = a_1(k), \qquad \overline{a_2(-\bar{k})} = a_2(k).
\end{equation}

The scattering matrix $S(k)$ is uniquely determined by the initial data $q_0(x)$. Indeed, introducing the notations $\psi_1(x,k)=(\Psi_1)_{11}(x,0,k)$, $\psi_2(x,k)=(\Psi_1)_{12}(x,0,k)$, $\psi_3(x,k)=(\Psi_1)_{21}(x,0,k)$ and $\psi_4(x,k)=(\Psi_1)_{22}(x,0,k)$,
equations (\ref{6-1}) reduce to 
the systems of  Volterra integral equations for $\psi_1$ and $\psi_3$:
\begin{gather}
\label{8.1}
\left\{
\begin{array}{lcl}
\psi_1(x,k)=1+\int_{-\infty}^{x}q_{0}(y)\psi_3(y,k)\,dy,\\
\psi_3(x,k)=\frac{A}{2ik}+\int_{-\infty}^{x}e^{2ik(x-y)}\left(A-\overline{q_{0}(-y)}\right)\psi_1(y,k)\,dy
+\frac{A}{2ik}\int_{-\infty}^{x}q_0(y)\left(1-e^{2ik(x-y)}\right)\psi_3(y,k)\,dy\\
\end{array}
\right.
\end{gather}
and for $\psi_2$ and $\psi_4$:
\begin{gather}
\label{8.3}
\left\{
\begin{array}{lcl}
\psi_2(x,k)=\int_{-\infty}^{x}e^{-2ik(x-y)}q_{0}(y)\psi_4(y,k)\,dy,\\
\psi_4(x,k)=1+\int_{-\infty}^{x}\left(A-\overline{q_{0}(-y)}\right)\psi_2(y,k)\,dy
+\frac{A}{2ik}\int_{-\infty}^{x}q_0(y)\left(e^{-2ik(x-y)}-1\right)\psi_4(y,k)\,dy.\\
\end{array}
\right.
\end{gather}
Then the entries $a_1$, $a_2$ and $b$ of the scattering matrix
can be determined as follows:
\begin{equation}
\label{8.2}
a_{1}(k)=\lim\limits_{x\rightarrow\infty}\left(\psi_1(x,k)-\frac{A}{2ik}\psi_3(x,k)\right),
\quad b(k)=\lim\limits_{x\rightarrow\infty}e^{-2ikx}\psi_3(x,k),
\end{equation}
and 
\begin{equation}
\label{8.4}
a_{2}(k)=\lim\limits_{x\rightarrow\infty}\psi_4(x,k).
\end{equation}

Alternatively, they can be 
written it terms of  the determinant relations:
\begin{subequations}\label{sd}
\begin{align}
a_1(k)&=\det\left(\Psi_1^{(1)}(0,0,k),\Psi_2^{(2)}(0,0,k)\right),
\quad k\in\overline{\mathbb{C}^{+}}\setminus\{0\},\\
a_2(k)&=\det\left(\Psi_2^{(1)}(0,0,k),\Psi_1^{(2)}(0,0,k)\right),
\quad k\in\overline{\mathbb{C}^{-}},\\
b(k)&=\det\left(\Psi_2^{(1)}(0,0,k),\Psi_1^{(1)}(0,0,k)\right),
\quad k\in\mathbb{R}.
\end{align}
\end{subequations}

The properties of the spectral functions, which follow from Proposition \ref{prop1}, are summarized in 
\begin{proposition}\label{properties}
The spectral functions $a_j(k)$, j=1,2, and $b(k)$ have the following properties
\begin{enumerate}
	\item 
	$a_{1}(k)$ is analytic in  $k\in\mathbb{C}^{+}$
and continuous in 
$\overline{\mathbb{C}^{+}}\setminus\{0\}$;
 $a_{2}(k)$ is analytic in $k\in\mathbb{C}^{-}$
and continuous in 
$\overline{\mathbb{C}^{-}}$. 
\item
 $a_{j}(k)=1+{O}\left(\frac{1}{k}\right)$, $j=1,2$ 
 as $k\rightarrow\infty$, $k\in\overline{\mathbb{C}^{(-1)^{j+1}}}$
 and 
$b(k)={O}\left(\frac{1}{k}\right)$ as  $k\rightarrow\infty$, 
$k\in\mathbb{R}$.
\item
$\overline{a_{1}(-\bar{k})}=a_1(k)$,  
$k\in\overline{\mathbb{C}^{+}}\setminus\{0\}$; \qquad
$\overline{a_{2}(-\bar{k})}=a_2(k)$,  
$k\in\overline{\mathbb{C}^{-}}$.
\item
 $a_{1}(k)a_{2}(k)+b(k)\overline{b(-k)}=1$, 
 $k\in{\mathbb R}\setminus\{0\}$ (follows from $\det S(k)=1$).
\item 
$a_1(k)=\frac{A^2a_2(0)}{4k^2}+O(\frac{1}{k})$ as $k\to0$, $k\in \overline{\mathbb{C}^{+}}$ and $b(k)=\frac{Aa_2(0)}{2ik}+O(1)$ as $k\to0$, $k\in\mathbb{R}$.
\end{enumerate}
\end{proposition}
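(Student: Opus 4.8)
The plan is to derive the five listed properties of the spectral functions directly from the corresponding properties of the eigenfunctions $\Psi_j$ established in Proposition~\ref{prop1}, using the determinant representations (\ref{sd}) as the bridge. The determinant form is more convenient than the limit formulas (\ref{8.2})--(\ref{8.4}) for this purpose because analyticity, asymptotics, and symmetry of a determinant follow termwise from those of its column vectors.

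For item~1, I would note that $a_1(k)$ is a determinant of $\Psi_1^{(1)}(0,0,k)$ and $\Psi_2^{(2)}(0,0,k)$, both of which are analytic in $\mathbb{C}^+$ and continuous up to $\overline{\mathbb{C}^+}\setminus\{0\}$ by Proposition~\ref{prop1}(i); the determinant of two such vectors inherits these properties. Symmetrically, $a_2(k)$ is built from $\Psi_2^{(1)}$ and $\Psi_1^{(2)}$, analytic in $\mathbb{C}^-$ and continuous on $\overline{\mathbb{C}^-}$ by Proposition~\ref{prop1}(ii). For item~2, I substitute the large-$k$ expansions from Proposition~\ref{prop1}(i)--(ii): e.g.\ $\Psi_1^{(1)}=\binom{1}{0}+O(k^{-1})$ and $\Psi_2^{(2)}=\binom{0}{1}+O(k^{-1})$ give $a_1(k)=\det\bigl(\binom{1}{0},\binom{0}{1}\bigr)+O(k^{-1})=1+O(k^{-1})$, and similarly for $a_2$; for $b(k)=\det\bigl(\Psi_2^{(1)},\Psi_1^{(1)}\bigr)$ the two leading columns are $\binom{1}{0}$ and $\binom{1}{0}$, whose determinant vanishes, leaving $b(k)=O(k^{-1})$. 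Item~3 is an immediate consequence of the symmetry (\ref{phi-sym})/(\ref{7.9}) evaluated at $x=0$: $\Lambda\overline{\Psi_1(0,0,-\bar k)}\Lambda^{-1}=\Psi_2(0,0,k)$, so inserting this into the determinants and using $\det\Lambda=-1$ together with the column swap that $\Lambda$ induces gives $\overline{a_1(-\bar k)}=a_1(k)$ and $\overline{a_2(-\bar k)}=a_2(k)$ (this is just the restriction to $x=0$ of (\ref{a-sym}), but I would write out the one-line determinant manipulation). Item~4 follows from $\det S(k)=a_1(k)a_2(k)+b(k)\overline{b(-k)}$ and the fact that $\det S(k)\equiv 1$, which in turn comes from $\det\Psi_j\equiv 1$ (Proposition~\ref{prop1}(iv)) applied to (\ref{9}).

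The only genuinely delicate point, and the one I expect to be the main obstacle, is item~5 — the precise behavior of $a_1(k)$ and $b(k)$ as $k\to 0$. Here I would use the fine expansions (\ref{k0}) of the columns of $\Psi_1,\Psi_2$ near $k=0$ in terms of the single pair $v_1(x,t),v_2(x,t)$ solving (\ref{vv}). Plugging $\Psi_1^{(1)}(0,0,k)=\tfrac1k\binom{v_1(0,0)}{v_2(0,0)}+O(1)$ and $\Psi_2^{(2)}(0,0,k)=-\tfrac1k\binom{\overline{v_2}(0,0)}{\overline{v_1}(0,0)}+O(1)$ into $a_1(k)=\det(\Psi_1^{(1)},\Psi_2^{(2)})$ produces a leading term of order $k^{-2}$ whose coefficient is $-\det\bigl(\binom{v_1}{v_2},\binom{\overline{v_2}}{\overline{v_1}}\bigr)=-(v_1\overline{v_1}-v_2\overline{v_2})=|v_2(0,0)|^2-|v_1(0,0)|^2$; the task is then to identify this quantity with $\tfrac{A^2}{4}a_2(0)$. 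This identification should come from evaluating $a_2(k)=\det(\Psi_2^{(1)},\Psi_1^{(2)})$ at $k=0$ using (\ref{k0-a})--(\ref{k0-b}): $\Psi_2^{(1)}(0,0,0)=-\tfrac{2i}{A}\binom{\overline{v_2}(0,0)}{\overline{v_1}(0,0)}$ and $\Psi_1^{(2)}(0,0,0)=\tfrac{2i}{A}\binom{v_1(0,0)}{v_2(0,0)}$, so $a_2(0)=\tfrac{4}{A^2}\det\bigl(\binom{\overline{v_2}}{\overline{v_1}},\binom{v_1}{v_2}\bigr)=\tfrac{4}{A^2}(|v_2(0,0)|^2-|v_1(0,0)|^2)$, which yields exactly $a_1(k)=\tfrac{A^2a_2(0)}{4k^2}+O(k^{-1})$. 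The same substitution in $b(k)=\det(\Psi_2^{(1)},\Psi_1^{(1)})$ gives a $k^{-1}$ leading term with coefficient $-\tfrac{2i}{A}\det\bigl(\binom{\overline{v_2}}{\overline{v_1}},\binom{v_1}{v_2}\bigr)=-\tfrac{2i}{A}\cdot\tfrac{A^2}{4}a_2(0)=-\tfrac{iA}{2}a_2(0)$, i.e.\ $b(k)=\tfrac{Aa_2(0)}{2ik}+O(1)$, as claimed. Throughout I will keep in mind that the decay assumptions on $q_0$ (integrability of $q_0$ near $-\infty$ and of $q_0-A$ near $+\infty$) guarantee convergence of the Neumann series and hence that all the $O(\cdot)$ terms above are legitimate, and that $a_j,b$ are genuinely continuous up to the real axis away from $k=0$.
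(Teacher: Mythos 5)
Your proposal is correct and takes essentially the same route as the paper: items 1--4 are read off from Proposition \ref{prop1} via the determinant representations (\ref{sd}) (together with $\det S(k)=1$ and the symmetry (\ref{phi-sym})), and item 5 is obtained exactly as in the paper's Remark \ref{prop2-proof}, by substituting the $k\to 0$ expansions (\ref{k0}) into (\ref{sd}) and identifying $|v_2(0,0)|^2-|v_1(0,0)|^2$ with $\tfrac{A^2}{4}a_2(0)$. Your explicit determinant computations reproduce (\ref{sz}) correctly, so no gap remains.
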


\begin{remark}\label{prop2-proof}
Concerning  Item 5 of Proposition \ref{properties}, we notice 
that substituting (\ref{k0}) into (\ref{sd}) yields, as $ k\to 0$,
\begin{subequations}
\label{sz}
\begin{align}
a_1(k)&=\frac{1}{k^2}(|v_2(0,0)|^2-|v_1(0,0)|^2)+O\left(\frac{1}{k}\right),\\
\label{sz2}
a_2(k)&=\frac{4}{A^2}(|v_2(0,0)|^2-|v_1(0,0)|^2)+O(k),\\
b(k)&=-\frac{2i}{kA}(|v_2(0,0)|^2-|v_1(0,0)|^2)+O(1),
\end{align}
\end{subequations}
from which Item 5 follows. Notice that in (\ref{sd})  
one can use any $(x,t)$ instead of $(0,0)$ as arguments in the right-hand sides,
which implies that $|v_2(0,0)|^2-|v_1(0,0)|^2$
in the r.h.s. of (\ref{sz}) can be replaced by 
$v_2(x,t)\bar v_2(-x,t)-v_1(x,t)\bar v_1(-x,t)$, the latter being a conserved quantity (independent of $x$ and $t$).
\end{remark}

\begin{remark}\label{r1}
In the case of the pure-step initial data, i.e., when
\begin{equation}
\label{step-ini}
q_0(x) = q_{0A}(x):=\begin{cases} 0, & x<0, \\
A, & x>0,
\end{cases}
\end{equation}
 the scattering matrix $S(k)$ is as follows:
\begin{equation}
\label{8.5}
S(k)=[\Phi_2(0,0,k)]^{-1}\Phi_1(0,0,k)=N_+^{-1}(k)N_-(k)=
\begin{pmatrix}
1+\frac{A^2}{4k^2}& -\frac{A}{2ik}\\
\frac{A}{2ik} & 1
\end{pmatrix}.
\end{equation}
Particularly, in this case $a_1(k)$  has a single, simple zero (at $k=i\frac{A}{2}$) in the upper half-plane  
  whereas $a_2(k)$ has no zeros in the lower half-plane.
\end{remark}

\subsection{The basic Riemann-Hilbert problem}

The Riemann--Hilbert formalism of the IST method is based on constructing (using the Jost soultions)
 a piece-wise meromorphic,
$2\times 2$-valued function in the $k$-complex plane, whose ``lack of analyticity'',
i.e., the jump across a contour and, if appropriate, some conditions at the singularity points,
can be fully characterized in terms of the spectral data (spectral functions and a discrete 
set of data related to the poles) uniquely determined by the initial data.

Define the $2\times 2$-valued function $M(x,t,k)$, piece-wise meromorphic relative to 
$\mathbb R$, as follows:

\begin{equation}
\label{DM}
M(x,t,k)=
\left\{
\begin{array}{lcl}
\left(\frac{\Psi_1^{(1)}(x,t,k)}{a_{1}(k)},\Psi_2^{(2)}(x,t,k)\right),\quad k\in\mathbb{C}^+,\\
\left(\Psi_2^{(1)}(x,t,k),\frac{\Psi_1^{(2)}(x,t,k)}{a_{2}(k)}\right),\quad k\in\mathbb{C}^-.\\
\end{array}
\right.
\end{equation}
Then the scattering relation (\ref{9}) implies that the boundary values 
$M_\pm(x,t,k) = \underset{k'\to k, k'\in {\mathbb C}^\pm}{\lim} M(x,t,k')$, $k\in\mathbb R$
 satisfy the  multiplicative jump condition
\begin{equation}\label{jr}
M_+(x,t,k)=M_-(x,t,k)J(x,t,k),\qquad k\in\mathbb{R}\setminus\{0\},
\end{equation}
where
\begin{equation}\label{jump}
J(x,t,k)=
\begin{pmatrix}
1+r_{1}(k)r_{2}(k)& r_{2}(k)e^{-2ikx-4ik^2t}\\
r_1(k)e^{2ikx+4ik^2t}& 1
\end{pmatrix}
\end{equation}
with the reflection coefficients defined by
\begin{equation}\label{r12}
r_1(k):=\frac{b(k)}{a_1(k)},\quad r_2(k):=\frac{\overline{b(-k)}}{a_2(k)}.
\end{equation}
Moreover, $M$ satisfies  the normalization condition
\begin{equation}
M(x,t,k)\to I,\qquad k\to\infty,
\end{equation}
where  $I$ is the $2\times2$ identity matrix.

Observe that the symmetry conditions 3 in Proposition \ref{properties} imply that
\begin{equation}\label{r-sym}
r_1(-k) r_2(-k)  = \overline{r_1(k)}\;  \overline{r_2(k)}, \quad k\in {\mathbb R\setminus\{0\}}.
\end{equation}
By the determinant property 4,  we also have
\begin{equation}\label{r-a}
1+r_1(k) r_2(k)=\frac{1}{a_1(k)a_2(k)},\qquad k\in {\mathbb R\setminus\{0\}}.
\end{equation}

Now notice that in view of (\ref{sz}), the behavior of $M$ as $k\to 0$ 
is 
 qualitatively different in the cases 
 $a_2(0)\ne 0$ and $a_2(0)= 0$. The former case
contains the case of ``pure-step initial data'', 
see Remark \ref{r1}, where $a_1(k)$ has (in ${\mathbb C}^+$) a single, simple zero located on the imaginary axis, and $a_2(k)$ has no zeros in  
${\mathbb C}^-$. Since small (in the $L^1$ norm) perturbations of the pure-step initial data
preserve these properties, we will concentrate, in the present paper, on the following two cases:

\begin{description}
\item [Case I:] 
The spectral function $a_1(k)$ has one (pure imaginary) simple zero in $\overline{\mathbb{C}^+}$, say $k=ik_1$, $k_1>0$, 
and $a_2(k)$ has no zeros in $\overline{\mathbb{C}^-}$.
\item [Case II:] 
The spectral function $a_1(k)$ has one simple zero in $\overline{\mathbb{C}^+}$, say $k=ik_1$, 
$k_1>0$, and $a_2(k)$ has one simple zero in $\overline{\mathbb{C}^-}$ at  $k=0$.
Thus we assume that 
 $\dot a_2(0)\ne 0$ and, additionally, we suppose that  $a_{11}:=\lim\limits_{k\to 0}ka_1(k)\not=0$.
\end{description}

\begin{remark}
Case I corresponds to the inequality
$
|v_2(0,0)|^2-|v_1(0,0)|^2 \ne 0
$
whereas in Case II the equality $
|v_2(0,0)|^2-|v_1(0,0)|^2 = 0
$ holds, see (\ref{vv}) and (\ref{sz}).
With this respect, Case I corresponds to ``generic'' initial conditions
whereas Case II corresponds to ``non-generic'' ones.
\end{remark}

\begin{remark}
From the symmetry relations (\ref{a-sym}) it follows that  
$a_{11}$ is purely imaginary. 
Moreover, if $a_1(k)$ has one simple zero, then
 $\Im a_{11}<0$ in  Case II.
\end{remark}

It is interesting that in contrast with the case of the local NLS,
  the  value of $k_1$ can't be prescribed independently of $b(k)$.

\begin{proposition}\label{zero-nongeneric}
Given $b(k)$ for $k\in\mathbb{R}\setminus\{0\}$,
 the zero $k=ik_1$ of $a_1(k)$ is determined as follows:
\begin{enumerate}[(i)]
	\item 
	In Case I, 
\begin{equation}\label{k1-gen}
k_1=\frac{A}{2}\exp\left\{-\frac{1}{2\pi
i}\mathrm{v.p.}\int_{-\infty}^{\infty}
\frac{\ln\frac{\zeta^2}{\zeta^2+1}
	(1-b(\zeta)\bar{b}(-\zeta))}{\zeta}\,d\zeta
\right\},
\end{equation}
\item
In Case II,
\begin{equation}\label{k1-ngen}
k_1=A\frac{\sqrt{(\Re b(0))^2+E_2^2}-\Re b(0)}{2E_1E_2},
\end{equation}
where
\begin{equation}\label{E1-E2}
E_1=\exp\left\{\frac{1}{2\pi i}\mathrm{v.p.}\int_{-\infty}^{\infty}
\frac{\ln(1-b(\zeta)\bar{b}(-\zeta))}{\zeta}\,d\zeta
\right\} \quad \text{and} \quad E_2=\exp\left\{\frac{1}{2}\ln(1-|b(0)|^2)\right\}
\end{equation}
(notice that $1-|b(0)|^2=a_{11}\dot a_2(0)\ne 0$ by assumption).
\end{enumerate}

\end{proposition}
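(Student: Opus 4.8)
The plan is to reconstruct $a_1$ from the boundary data $b|_{\mathbb{R}\setminus\{0\}}$ by a scalar Riemann--Hilbert (trace--formula) argument; this determines $a_1$ up to the position of its single zero in $\mathbb{C}^{+}$, and the prescribed behaviour of $a_1$ as $k\to0$ (item~5 of Proposition~\ref{properties}) then fixes that position. In Case~I I would strip from $a_1$ its zero $k=ik_1$ and its second--order pole at $k=0$ by setting
\[
H(k):=a_1(k)\,\frac{k+ik_1}{k-ik_1}\,\frac{k^{2}}{(k+i)^{2}} .
\]
By Proposition~\ref{properties}(1),(2),(5), by the Case~I hypothesis that $a_2$ is zero--free in $\overline{\mathbb{C}^{-}}$, and since $a_1$ has no real zeros (because $1-b(\zeta)\overline{b(-\zeta)}=1/(a_1a_2)$ is finite on $\mathbb{R}\setminus\{0\}$), the function $H$ is analytic and non-vanishing in $\overline{\mathbb{C}^{+}}$, continuous up to $\mathbb{R}$, and $H(k)\to1$ as $k\to\infty$; hence $\log H$ is single-valued, analytic in $\mathbb{C}^{+}$ and vanishes at infinity. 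Using $a_1a_2=1-b\overline{b(-\cdot)}$ on $\mathbb{R}$ (item~4, or (\ref{r-a})) and moving $a_2$ and the rational factors to the right-hand side, one obtains an \emph{additive} scalar RH problem: the boundary value of $\log H$ from $\mathbb{C}^{+}$ plus the boundary value of $\log a_2$ from $\mathbb{C}^{-}$ equals, on $\mathbb{R}$,
\[
F(\zeta;k_1)=\ln\bigl(1-b(\zeta)\overline{b(-\zeta)}\bigr)+\ln\frac{\zeta+ik_1}{\zeta-ik_1}+\ln\frac{\zeta^{2}}{(\zeta+i)^{2}},
\]
whose logarithmic singularities at $\zeta=0$ (from the first and third terms) cancel by item~5, so that $F(\cdot;k_1)$ is bounded near $0$ and is $O(\zeta^{-1})$ at infinity.

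Next I would solve this by the Cauchy/Plemelj formula, $\log H(k)=\frac{1}{2\pi i}\int_{\mathbb{R}}\frac{F(\zeta;k_1)}{\zeta-k}\,d\zeta$ for $k\in\mathbb{C}^{+}$, and let $k\to0$: the limit equals $\tfrac12 F(0;k_1)+\frac{1}{2\pi i}\,\mathrm{v.p.}\!\int_{\mathbb{R}}\frac{F(\zeta;k_1)}{\zeta}\,d\zeta$, while the value $H(0)$ is prescribed by item~5 and is expressible through $b$ alone (via $\lim_{k\to0}k\,b(k)$). Equating these yields one scalar equation in which $k_1$ enters only through the value of $\frac{1}{2\pi i}\,\mathrm{v.p.}\!\int\frac{1}{\zeta}\ln\frac{\zeta+ik_1}{\zeta-ik_1}\,d\zeta$. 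This integral is computed in closed form: differentiating it in $k_1$ gives $\frac{1}{2\pi i}\int\frac{2i\,d\zeta}{\zeta^{2}+k_1^{2}}=\frac{1}{k_1}$, so it equals $\ln k_1$ up to a constant; the same computation with $k_1=1$ absorbs the constant coming from the $\ln\frac{\zeta^{2}}{(\zeta+i)^{2}}$ term (after writing $\frac{\zeta^{2}}{(\zeta+i)^{2}}=\frac{\zeta^{2}}{\zeta^{2}+1}\cdot\frac{\zeta-i}{\zeta+i}$), while $\mathrm{v.p.}\!\int\frac{1}{\zeta}\ln\frac{\zeta^{2}}{\zeta^{2}+1}\,d\zeta=0$ by oddness. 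Using that $a_2(0)\in\mathbb{R}$ by the symmetry relation for $a_2$, and collecting terms, one arrives at (\ref{k1-gen}). As a consistency check, for the pure-step data $b(k)=\frac{A}{2ik}$ one has $1-b(\zeta)\overline{b(-\zeta)}=1+\frac{A^{2}}{4\zeta^{2}}$, the integrand in (\ref{k1-gen}) becomes odd, the v.p.\ integral vanishes, and $k_1=\frac{A}{2}$, in agreement with Remark~\ref{r1}.

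For Case~II the scheme is the same but the picture at $k=0$ is different: by item~5 together with the assumptions $\dot a_2(0)\ne0$, $a_{11}\ne0$, the function $a_1$ now has only a simple pole at $0$ and $a_2$ a simple zero at $0$, a point lying on the jump contour. One therefore replaces $\frac{k^{2}}{(k+i)^{2}}$ in $H$ by $\frac{k}{k+i}$, handles the zero of $a_2$ at $0$ by a one-sided local factor, keeps track of $b(0)$ (finite here), and uses $1-|b(0)|^{2}=a_{11}\dot a_2(0)$, obtained by letting $k\to0$ in item~4. The analogous reconstruction and $k\to0$ matching now lead not to a linear but to a \emph{quadratic} equation for $k_1$ — the extra linear term is responsible for the $\Re b(0)$ in the answer — and its positive root, selected by the sign $\Im a_{11}<0$ implied by the symmetry relations (\ref{a-sym}), is precisely (\ref{k1-ngen}) with $E_1,E_2$ as in (\ref{E1-E2}).

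The main obstacle, and the step requiring the most care, is the bookkeeping of branches and of the singularity at $k=0$: the rational prefactors must be chosen so that $\log H$ is genuinely single-valued and tends to $0$ at infinity (this also fixes the index, so that no polynomial term sneaks into the Cauchy representation), one must verify that $F(\cdot;k_1)$ lies in the Hölder class where Plemelj's formula applies, and one must correctly identify the finite boundary value $\lim_{k\to0}\log H(k)$ remaining after the singular factor has been divided out, so that the resulting equation for $k_1$ is consistent (real and positive). In Case~II there is the additional subtlety that $k=0$ is a zero of $a_2$ lying on the contour, so the local factorization there must be performed one-sidedly. Once the branches are pinned down, the evaluation of the auxiliary v.p.\ integrals is elementary.
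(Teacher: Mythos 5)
Your Case~I argument is sound and is essentially the paper's own proof in different clothing: you reconstruct $a_1$ from the determinant relation $a_1a_2=1-b(\cdot)\overline{b(-\cdot)}$ by a scalar (trace-formula) Riemann--Hilbert problem after stripping the zero at $ik_1$ and the double pole at $k=0$, and you fix $k_1$ by matching the $k\to0$ behaviour dictated by item~5 of Proposition~\ref{properties}; whether one writes this multiplicatively with the rational factors split between $\tilde a_1$ and $\tilde a_2$ (as in the paper, so that the jump is $k_1$-independent) or additively with all factors loaded into $H$ and the $k_1$-dependence extracted from the v.p.\ integral, is only bookkeeping, and your cancellation of $a_2(0)$ (equivalently $\lim_{k\to0}kb(k)$) between $H(0)$ and $\tfrac12F(0;k_1)$ is the same mechanism that makes the paper's $a_2(0)$ drop out.

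Case~II, however, has a genuine gap. The ingredients you list --- the analogous trace formula, the relation $1-|b(0)|^2=a_{11}\dot a_2(0)$ obtained from item~4 as $k\to0$, and a one-sided factor at $k=0$ --- cannot determine $k_1$. Indeed, the trace formula (\ref{traceng}) gives $a_{11}=-ik_1E_1E_2$ and $\dot a_2(0)=\frac{i}{k_1}E_1^{-1}E_2$, so that $a_{11}\dot a_2(0)=E_2^2=1-|b(0)|^2$ holds \emph{identically}, for every value of $k_1$: your ``$k\to0$ matching'' matches the Plemelj evaluation against itself and produces no equation at all, let alone a quadratic one. Moreover, in Case~II item~5 of Proposition~\ref{properties} is quantitatively empty (its leading coefficients vanish since $a_2(0)=0$), so nothing in your scheme introduces either the amplitude $A$ or $\Re b(0)$, both of which appear explicitly in (\ref{k1-ngen}). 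The missing ingredient is the additional local relation
\begin{equation*}
a_{11}=iA\,\Re b(0)-\frac{A^2}{4}\dot a_2(0),
\end{equation*}
which the paper derives not from the abstract properties of the spectral functions but from the second-order expansions of the Jost solutions $\Psi_j$ at $k=0$ (formulas (\ref{k0s})--(\ref{zzz})), using the Case~II condition $|v_1(0,0)|=|v_2(0,0)|$ and the determinant representations (\ref{sd}). It is precisely this relation, combined with the trace-formula expressions for $a_{11}$ and $\dot a_2(0)$, that yields the quadratic equation for $k_1$ and hence the $\Re b(0)$ term in (\ref{k1-ngen}); without it your claimed ``extra linear term'' has no source, and the root-selection remark ($\Im a_{11}<0$, $k_1>0$) cannot repair this.
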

\begin{proof} (i) \textit{Case I}. 
Define functions $\tilde a_1(k)$ and $\tilde a_2(k)$ by 
 \[
\tilde a_1(k) = a_1(k)\frac{k^2}{(k-ik_1)(k+i)}, \qquad 
\tilde a_2(k) = a_2(k)\frac{k-ik_1}{k-i}.
\]
Then the determinant relation (see Item 4 in Proposition \ref{properties})
can be viewed as the  following scalar RH problem w.r.t.  $\tilde a_j(k)$, $j=1,2$:
given $b(k)$, $k\in \mathbb R$, find $\tilde a_1(k)$ and $\tilde a_2(k)$
analytic and having no zeros in $\overline{\mathbb{C}^+}$ and $\overline{\mathbb{C}^-}$ respectively, satisfying the jump condition
\begin{equation}\label{sRH}
\tilde a_1(k) \tilde a_2(k) =
\frac{k^2}{k^2+1}(1-b(k)\bar{b}(-k)),\quad k\in\mathbb{R}
\end{equation}
and  the normalization conditions $\tilde a_j(k)\to 1$ as $k\to\infty$.
The unique solution of this RH problem is given by 
\[
\tilde a_1(k) = e^{\chi(k)},\qquad \tilde a_2(k) = e^{-\chi(k)},
\]
where 
\[
\chi(k) = \frac{1}{2\pi i}
	\int_{-\infty}^{\infty}\frac{\ln
		\frac{\zeta^2}{\zeta^2+1}(1-b(\zeta)\bar{b}(-\zeta))}
	{\zeta-k}\,d\zeta.
\]
Then $a_1(k)$ and $a_2(k)$ can be written as
\begin{subequations}\label{traceg}
	\begin{equation}
	\label{traceg1}
	a_1(k)=\frac{(k-ik_1)(k+i)}{k^2}e^{\chi(k)}
		\end{equation}
	and
	\begin{equation}
	a_2(k)=\frac{k-i}{k-ik_1}e^{-\chi(k)},
		\end{equation}
\end{subequations}
which, being evaluated  at $k=0$, gives
\begin{equation}\label{a-12-1}
a_1(k)=\frac{k_1e^{\chi(+i0)}}{k^2}(1+o(k)) \ \ \text{and}\ \ 
a_2(0)=\frac{e^{-\chi(-i0)}}{k_1}.
\end{equation}
On the other hand (see (\ref{sz})), 
\begin{equation}\label{a-12-2}
a_1(k)=\frac{A^2a_2(0)}{4k^2}(1+o(k)), \qquad k\to 0.
\end{equation}
Comparing (\ref{a-12-1}) and (\ref{a-12-2}) and taking into account that 
(by the Sokhotski-Plemelj formulas)
\[
\chi(+i0) + \chi(-i0) = \frac{1}{\pi i}\mathrm{v.p.}\int_{-\infty}^{\infty}
\frac{\ln\frac{\zeta^2}{\zeta^2+1}
(1-b(\zeta)\bar{b}(-\zeta))}{\zeta}\,d\zeta,
\]
we arrive at (\ref{k1-gen}).

(ii) \textit{Case II.}
Observe that due to the symmetry relation (\ref{7.9}) and Item (vi) in  Proposition \ref{prop1}, the behavior of  $\Psi_j(x,t,k)$, $j=1,2$ as $k\to 0$ can be characterized as follows:
\begin{subequations}
\label{k0s}
\begin{align}
\label{k0s-a}
&\Psi_1^{(1)}(x,t,k)=\frac{1}{k}
\begin{pmatrix}v_1(x,t)\\v_2(x,t)\end{pmatrix}
+\begin{pmatrix} s_1(x,t)\\ s_2(x,t)\end{pmatrix}
+O(k),\\
&\Psi_1^{(2)}(x,t,k)=\frac{2i}{A}\begin{pmatrix}v_1(x,t)\\ v_2(x,t)\end{pmatrix}
+k\begin{pmatrix} h_1(x,t)\\ h_2(x,t)\end{pmatrix}
+O(k^2),\\
\label{k0s-b}
&\Psi_2^{(1)}(x,t,k)=-\frac{2i}{A}
\begin{pmatrix}
\overline{v_2}(-x,t)\\ \overline{v_1}(-x,t)
\end{pmatrix}
-k\begin{pmatrix}\overline{h_2}(-x,t)\\ \overline{h_1}(-x,t) \end{pmatrix}
+O(k^2),\\
&\Psi_2^{(2)}(x,t,k)=
-\frac{1}{k}
\begin{pmatrix}\overline{v_2}(-x,t)\\\overline{v_1}(-x,t)
\end{pmatrix}
+\begin{pmatrix} \overline{s_2}(-x,t)\\ \overline{s_1}(-x,t)\end{pmatrix}
+O(k),
\end{align}
\end{subequations}
with some $v_j$, $s_j$, and $h_j$ ($j=1,2$).
Then, using the definitions (\ref{sd}) of the spectral functions and taking into account that $|v_2(0,0)|^2-|v_1(0,0)|^2=0$ in Case II, we have as $k\to 0$:
\begin{subequations}
\label{zzz}
\begin{align}
\label{zzza}
a_1(k)&=
\left.\frac{1}{k}(v_1\bar{s}_1-\bar{v}_1s_1-v_2\bar{s}_2+\bar{v}_2s_2)
\right|_{x,t=0}+O(1),\\
a_2(k)&=
\left.k\frac{2i}{A}
(v_1\bar{h}_1+\bar{v}_1h_1-v_2\bar{h}_2-\bar{v}_2h_2)\right|_{x,t=0}
+O(k^2),\\
b(k)&=
\left.v_1\bar{h}_1-v_2\bar{h}_2+\frac{2i}{A}(\bar{v}_1s_1-\bar{v}_2s_2)
\right|_{x,t=0}+O(k).
\end{align}
\end{subequations}
Equations (\ref{zzz}) imply that
\begin{equation}\label{aab}
a_{11}=iA\Re b(0)-\frac{A^2}{4}\dot a_2(0),
\end{equation}
where $a_{11}=\lim\limits_{k\to0}(ka_1(k))$.

On the other hand, 
introducing 
\[
\hat a_1(k)=a_1(k) \frac{k}{k-ik_1} \quad \text{and} \ \ 
\hat a_2(k)=a_2(k) \frac{k-ik_1}{k},
\]
 the determinant relation can be viewed as the scalar RH problem with the jump condition
\[
\hat a_1(k)\hat a_2(k) = 1-b(k)\bar b(-k),
\]
whose solution gives
\begin{subequations}\label{traceng}
\begin{equation}
\label{tracenga}
a_1(k)=\frac{k-ik_1}{k}\exp\left\{\frac{1}{2\pi i}
\int_{-\infty}^{\infty}\frac{\ln(1-b(\zeta)\bar{b}(-\zeta))}
{\zeta-k}\,d\zeta\right\},
\end{equation}
and
\begin{equation}
a_2(k)=\frac{k}{k-ik_1}\exp\left\{-\frac{1}{2\pi i}
\int_{-\infty}^{\infty}\frac{\ln(1-b(\zeta)\bar{b}(-\zeta))}
{\zeta-k}\,d\zeta\right\}.
\end{equation}
\end{subequations}
From  (\ref{traceng}), using the Sokhotski-Plemelj formulas, we obtain
\begin{equation}
a_{11}=-ik_1E_1E_2 \quad \text{and}\ \  \dot a_2(0)=\frac{i}{k_1}E_1^{-1}E_2,
\end{equation}
where $E_1$ and $E_2$ are given by (\ref{E1-E2}),
which, being compared with 
 (\ref{aab}), uniquely determines $k_1>0$  as the solution of a quadratic equation.
\end{proof}

Taking into account the singularities of 
 $\Psi_j(x,t,k)$, $j=1,2$ and  $a_1(k)$  at  $k=0$ (see Proposition \ref{prop1}),  the behavior of $M(x,t,k)$ at $k=0$ can be described as follows: in Case I,
\begin{subequations}\label{z}
\begin{align}
\label{+i0}
& M_+(x,t,k)=
\begin{pmatrix}
\frac{4}{A^2a_2(0)}v_1(x,t)& -\overline{v_2}(-x,t)\\
\frac{4}{A^2a_2(0)}v_2(x,t)& -\overline{v_1}(-x,t)
\end{pmatrix}
(I+O(k))
\begin{pmatrix}
k& 0\\
0& \frac{1}{k}
\end{pmatrix}, & k\rightarrow +i0,\\
\label{-i0}
& M_-(x,t,k)=\frac{2i}{A}
\begin{pmatrix}
-\overline{v_2}(-x,t)& \frac{v_1(x,t)}{a_2(0)}\\
-\overline{v_1}(-x,t)& \frac{v_2(x,t)}{a_2(0)}
\end{pmatrix}
+O(k), & k\rightarrow -i0,
\end{align}
\end{subequations}
and Case II,
\begin{subequations}\label{nz}
\begin{align}
\label{n+i0}
& M_+(x,t,k)=
\begin{pmatrix}
\frac{v_1(x,t)}{a_{11}}& -\overline{v_2}(-x,t)\\
\frac{v_2(x,t)}{a_{11}}& -\overline{v_1}(-x,t)
\end{pmatrix}
(I+O(k))
\begin{pmatrix}
1& 0\\
0& \frac{1}{k}
\end{pmatrix}, & k\rightarrow +i0,\\
\label{n-i0}
& M_-(x,t,k)=\frac{2i}{A}
\begin{pmatrix}
-\overline{v_2}(-x,t)& \frac{v_1(x,t)}{\dot{a}_2(0)}\\
-\overline{v_1}(-x,t)& \frac{v_2(x,t)}{\dot{a}_2(0)}
\end{pmatrix}
(I+O(k))
\begin{pmatrix}
1& 0\\
0& \frac{1}{k}
\end{pmatrix}, & k\rightarrow -i0
\end{align}
\end{subequations}
(recall that $a_{11}$ is determined by  $a_1(k)=\frac{a_{11}}{k}+O(1)$ as $k\to 0$).

Additionally, if $a_1(ik_1)=0$  with $k_1>0$
(recall that in this case we assume that this zero is simple), then   $M(x,t,k)$ satisfies the  residue condition 
\begin{equation}
\label{res}
\underset{k=ik_1}{\operatorname{Res}} M^{(1)}(x,t,k)=
\frac{\gamma_1}{\dot{a}_1(ik_1)}e^{-2k_1x-4ik_1^2t}M^{(2)}(x,t,ik_1),\quad
|\gamma_1|=1,
\end{equation}
where $\Psi_1^{(1)}(0,0,ik_1)=\gamma_1\Psi_2^{(2)}(0,0,ik_1)$. Notice that the symmetry relation (\ref{7.9}) implies that 
$\overline{\Psi}_1^{(1)}(0,0,ik_1)=
\gamma_1^{-1}\overline{\Psi}_2^{(2)}(0,0,ik_1)$ and thus $|\gamma_1|=1$
(cf. \cite{AMN}).

Notice that if $a_1(k)$ has a  zero $k=\zeta_1$ that is not pure imaginary, then,
due to the symmetry conditions, it also has a zero at $k=\zeta_2=-\bar \zeta_1$,
and the associated residue conditions have the form:
\begin{subequations}\label{2res}
\begin{equation}
\label{2res1}
\underset{k=\zeta_1}{\operatorname{Res}} M^{(1)}(x,t,k)=
\frac{\eta_1}{\dot a_1(\zeta_1)}
e^{2i\zeta_1x+4i\zeta_1^2t}M^{(2)}(x,t,\zeta_1)
\end{equation}
and
\begin{equation}
\label{2res2}
\underset{k=\zeta_2}{\operatorname{Res}} M^{(1)}(x,t,k)=
\frac{1}{\bar\eta_1\dot a_1(\zeta_2)}
e^{2i\zeta_2x+4i\zeta_2^2t}M^{(2)}(x,t,\zeta_2),
\end{equation}
\end{subequations}
where $\eta_1$ is determined by $\Psi_1^{(1)}(0,0,\zeta_1)=\eta_1\Psi_2^{(2)}(0,0,\zeta_1)$.

Now we are at a position to formulate the Riemann-Hilbert problem, whose solution gives 
the solution of the initial value  problem (\ref{1}), (\ref{2.0}).
Let $b(k)$, $k\in{\mathbb R}$
and $\gamma_1$ with $|\gamma_1|=1$ be the spectral data associated with the initial data $q_0(x)$
in (\ref{1}). Then the Riemann-Hilbert problem is as follows:
\begin{description}
\item [Basic Riemann--Hilbert Problem:] 
Given $b(k)$ and $\gamma_1$,
find the $2\times 2$-valued function $M(x,t,k)$, piece-wise meromorphic in $k$ relative to  
$\mathbb{R}$ and satisfying the following conditions:

\begin{description}
\item [(i)] Jump conditions. The  non-tangential limits $M_{\pm}(x,t,k)=M(x,t,k\pm i0)$
exist a.e. for $k\in\mathbb{R}$ such that $M(x,t,\cdot)-I \in L^2(\mathbb{R}\setminus[-\varepsilon, \varepsilon])$ for any $\varepsilon>0$ and 
$M_{\pm}(x,t,k)$ satisfy the condition
\begin{equation}\label{jRH}
M_+(x,t,k)=M_-(x,t,k)J(x,t,k) \qquad \text{for a.e.}\  k\in\mathbb{R}\setminus\{0\},
\end{equation}
where the jump matrix $J(x,t,k)$ is given by (\ref{jump}), with $r_1$ and $r_2$ given
in terms of $b$ 
by (\ref{r12})
with (\ref{traceg}) (Case I) or (\ref{traceng}) (Case II).
\item[(ii)] Normalization at $k=\infty$:
$$
M(x,t,k)=I+O(k^{-1})\qquad \text{uniformly as}\ k\to\infty.
$$
\item [(iii)] Residue condition (\ref{res}) with $k_1$ given in terms of $b$ using (\ref{k1-gen}) (Case I) or (\ref{k1-ngen}) (Case II).
\item [(iv)] Singularity conditions at $k=0$: $M(x,t,k)$ satisfies 
(\ref{z}) (Case I) or (\ref{nz}) (Case II), 
where $v_j(x,t)$, $j=1,2$ are some (not prescribed) functions.

\end{description}

Assume that the RH problem (i)--(iv) has a solution $M(x,t,k)$. 
Then the solution of the initial value problem (\ref{1}), (\ref{2.0}) 
is given in terms of the (12) and (21) entries of $M(x,t,k)$ as follows:
\begin{equation}\label{sol}
q(x,t)=2i\lim_{k\to\infty}kM_{12}(x,t,k),
\end{equation}
and
\begin{equation}\label{sol1}
q(-x,t)=-2i\lim_{k\to\infty}k\overline{M_{21}(x,t,k)}.
\end{equation}

\end{description}

The solution of the RH problem is unique, if exists. Indeed, if $M$ and $\tilde M$ are 
two solutions, then
conditions (\ref{z})  or (\ref{nz}) provide the boundedness of $M \tilde M^{-1}$ at $k=0$.
Then 
the standard arguments based of the Liouville theorem leads to $M \tilde M^{-1}\equiv I$.

\begin{remark} 
From  (\ref{sol}) and (\ref{sol1}) it follows  that in order to present the 
solution of (\ref{1}), (\ref{2.0}) for all $x\in \mathbb R$, it is sufficient to have the solution of
 the RH problem for, say, $x\ge 0 $ only.
\end{remark}

\begin{remark}
In the general case with more zeros of $a_1(k)$ in ${\mathbb C}^+$ and/or zeros of  $a_2(k)$
in ${\mathbb C}^-$, relevant residue conditions, of type (\ref{res}) and/or (\ref{2res}),
have to be specified, in terms of a prescribed set of zeros and corresponding norming constants.
\end{remark}

\begin{proposition}\label{lemma-sym}
The solution $M$ of the Riemann--Hilbert problem \textbf{(i)-(iv)}
satisfies the following symmetry condition (cf. (\ref{phi-sym})):
\begin{equation}
M(x,t,k)=\begin{cases}
\Lambda \overline{M(-x,t,-\bar k)} \Lambda^{-1} 
	\begin{pmatrix}
		\frac{1}{a_1(k)} & 0 \\ 0 & a_1(k)
	\end{pmatrix}, & k\in{\mathbb C}^+\setminus\{0\}, \\
	\Lambda \overline{M(-x,t,-\bar k)} \Lambda^{-1}
	\begin{pmatrix}
		a_2(k) & 0 \\ 0 & \frac{1}{a_2(k)}
	\end{pmatrix}, & k\in{\mathbb C}^-\setminus\{0\}.
\end{cases}
\label{M-sym}
\end{equation}
\end{proposition}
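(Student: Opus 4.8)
The plan is to deduce (\ref{M-sym}) from the uniqueness of the solution of the basic Riemann--Hilbert problem, which has just been established, together with the symmetry (\ref{7.9}) of the matrices $\Psi_j$. Since the RH problem \textbf{(i)}--\textbf{(iv)} has at most one solution, and since the function $M$ defined by (\ref{DM}) in terms of the Jost solutions is a solution of it (this is what the construction preceding the RH problem shows: the jump (\ref{jr}), the normalization, the residue condition (\ref{res}) and the singularity conditions (\ref{z})/(\ref{nz}) all hold for it), that $M$ is \emph{the} solution. Hence it suffices to verify (\ref{M-sym}) for this particular $M$, and this reduces to a column-by-column identity. The elementary facts I will use are: (a) the involution $k\mapsto-\bar k$ maps $\mathbb{C}^+$, $\mathbb{C}^-$ and $\mathbb{R}$ onto themselves; (b) the relation (\ref{7.9}), being an identity between functions analytic in a half-plane and continuous up to $\mathbb{R}\setminus\{0\}$, extends by analytic continuation to the column identities $\Lambda\overline{\Psi_1^{(1)}(-x,t,-\bar k)}=\Psi_2^{(2)}(x,t,k)$ on $\overline{\mathbb{C}^+}\setminus\{0\}$, $\Lambda\overline{\Psi_1^{(2)}(-x,t,-\bar k)}=\Psi_2^{(1)}(x,t,k)$ on $\overline{\mathbb{C}^-}$, together with their inverse forms $\Lambda\overline{\Psi_2^{(2)}(-x,t,-\bar k)}=\Psi_1^{(1)}(x,t,k)$ and $\Lambda\overline{\Psi_2^{(1)}(-x,t,-\bar k)}=\Psi_1^{(2)}(x,t,k)$; and (c) the symmetry (\ref{a-sym}), which gives $\overline{a_j(-\bar k)}=a_j(k)$ for $k$ in the relevant half-plane.

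Then I will simply unwind the definitions. For $k\in\mathbb{C}^+$ one has $M(-x,t,-\bar k)=\bigl(\Psi_1^{(1)}(-x,t,-\bar k)/a_1(-\bar k),\,\Psi_2^{(2)}(-x,t,-\bar k)\bigr)$ by (\ref{DM}); taking the complex conjugate and using $\overline{a_1(-\bar k)}=a_1(k)$ turns this into $\bigl(\overline{\Psi_1^{(1)}(-x,t,-\bar k)}/a_1(k),\,\overline{\Psi_2^{(2)}(-x,t,-\bar k)}\bigr)$. Conjugation by $\Lambda$ interchanges the two columns and the two rows, so the first column of $\Lambda\overline{M(-x,t,-\bar k)}\Lambda^{-1}$ is $\Lambda\overline{\Psi_2^{(2)}(-x,t,-\bar k)}=\Psi_1^{(1)}(x,t,k)$ and its second column is $\Lambda\overline{\Psi_1^{(1)}(-x,t,-\bar k)}/a_1(k)=\Psi_2^{(2)}(x,t,k)/a_1(k)$, by fact (b). Right-multiplying by $\mathrm{diag}\bigl(a_1^{-1}(k),a_1(k)\bigr)$ then reproduces exactly $\bigl(\Psi_1^{(1)}(x,t,k)/a_1(k),\,\Psi_2^{(2)}(x,t,k)\bigr)=M(x,t,k)$, i.e. the first line of (\ref{M-sym}); note that the factor $a_1(k)$ is precisely what restores the normalizing denominator in one column while removing it from the other. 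The case $k\in\mathbb{C}^-$ is identical, with $a_2$ and the columns $\Psi_2^{(1)}$, $\Psi_1^{(2)}$ in place of $a_1$, $\Psi_1^{(1)}$, $\Psi_2^{(2)}$, and yields the second line of (\ref{M-sym}).

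I do not expect a genuine obstacle in this route: the only points needing care are the analytic continuation of (\ref{7.9}) off the real axis (fact (b)) and bookkeeping which diagonal factor restores (\ref{DM}). As an alternative that avoids referring to (\ref{DM}), one takes the right-hand side of (\ref{M-sym}) as an ansatz $\widehat M$ and checks directly that it satisfies \textbf{(i)}--\textbf{(iv)}: the normalization \textbf{(ii)} follows from $a_j(k)\to1$ as $k\to\infty$; the jump condition \textbf{(i)} reduces, after conjugating $J$ in (\ref{jump}) together with the diagonal factors and using $\overline{a_j(-k)}=a_j(k)$, the symmetry (\ref{r-sym}), the definitions (\ref{r12}) and the determinant relation (\ref{r-a}), to the identity $D_-^{-1}\Lambda\overline{J(-x,t,-k)}\Lambda^{-1}D_+=J(x,t,k)$ with $D_+=\mathrm{diag}(a_1^{-1},a_1)$, $D_-=\mathrm{diag}(a_2,a_2^{-1})$ on $\mathbb{R}$; the residue condition \textbf{(iii)} with the same $\gamma_1$, $|\gamma_1|=1$, is inherited because $-\overline{ik_1}=ik_1$ and $a_1$ has a simple zero there; and the singularity conditions \textbf{(iv)} follow by substituting the expansions (\ref{z}) (Case I) or (\ref{nz}) (Case II) together with the behavior of $a_1,a_2$ near $k=0$ from item 5 of Proposition \ref{properties} (and the conserved-quantity identity in Remark \ref{prop2-proof}). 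In this second route the most delicate bookkeeping is the check of \textbf{(iv)}, because the $k=0$ singularity is of the nonstandard, case-dependent type specific to this problem; that is the step I would write out with the greatest care.
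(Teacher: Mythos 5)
Your proposal is correct, but your primary route is genuinely different from the paper's proof; interestingly, the ``alternative'' you sketch at the end is precisely the paper's argument. The paper proceeds intrinsically within the Riemann--Hilbert problem: it takes the right-hand side of (\ref{M-sym}) as a candidate solution, shows via the jump-matrix identity $\Lambda\overline{J(-x,t,-k)}\Lambda^{-1}=\operatorname{diag}(a_2,a_2^{-1})\,J(x,t,k)\,\operatorname{diag}(a_1,a_1^{-1})$ (a consequence of (\ref{r-sym}), (\ref{r-a}) and (\ref{a-sym})) that it satisfies the same jump, checks consistency with the residue condition (\ref{res}) and the singularity conditions (\ref{z})/(\ref{nz}), and concludes by uniqueness. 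Your main argument instead identifies the unique solution with the explicit construction (\ref{DM}) in terms of the Jost functions and verifies (\ref{M-sym}) column by column from (\ref{7.9}) and (\ref{a-sym}); your bookkeeping there is accurate (the diagonal factor $\operatorname{diag}(a_1^{-1},a_1)$, resp.\ $\operatorname{diag}(a_2,a_2^{-1})$, does exactly restore the normalizing denominators), and the only step needing a word is the extension of (\ref{7.9}) to the analytic columns in the half-planes, which follows either by Schwarz reflection across $\mathbb{R}\setminus\{0\}$ or directly from the Volterra equations (\ref{6}) at complex $k$. What each approach buys: your route is more elementary and computational, but it presupposes that the Jost-based matrix (\ref{DM}) is \emph{the} solution of the RH problem, i.e.\ it leans on the standing assumption that the solution $q(x,t)$ of the Cauchy problem exists and on the direct-scattering construction; the paper's argument is intrinsic to the RH data $(b,\gamma_1)$ and so applies to any solution of the RH problem regardless of its origin, which matters because the RH problem is subsequently used to \emph{define} $q(x,t)$ via (\ref{sol})--(\ref{sol1}), and the symmetry is wanted at that level. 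In the intrinsic route the nontrivial checks you correctly flag (the residue condition at $k=ik_1$, which uses $|\gamma_1|=1$ together with the fact that $\dot a_1(ik_1)$ is purely imaginary by (\ref{a-sym}), and the case-dependent singularity structure at $k=0$) are exactly the points the paper compresses into the phrase that (\ref{z}), (\ref{nz}) and (\ref{res}) are ``consistent with (\ref{M-sym})''.
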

\begin{proof} 
Follows from   the symmetry of  the jump matrix (\ref{jump})
in (\ref{jRH}) 
\[
\Lambda \overline{J(-x,t,-k)} \Lambda^{-1} = \begin{pmatrix}
		a_2(k) & 0 \\ 0 & \frac{1}{a_2(k)}\end{pmatrix} J(x,t,k)\begin{pmatrix}
		a_1(k) & 0 \\ 0 & \frac{1}{a_1(k)}
	\end{pmatrix},\qquad k\in\mathbb{R}\setminus\{0\}
\]
(which, in turns, follows from (\ref{r-sym}) and (\ref{r-a})), and the fact that the structural conditions (\ref{z}) and (\ref{nz}) and the residue condition (\ref{res}) are
 consistent with (\ref{M-sym}).
\end{proof}

\subsection{One-soliton solution}

\begin{proposition} 
Let $a_1(k)$, $a_2(k)$, and $b(k)$ be the spectral functions (i) associated with some $q_0(x)$ and (ii) satisfying the following conditions:
\begin{itemize}
	\item 
	$b(k)=0$ for all $k\in\mathbb{R}$;
	\item
	$a_1(k)$ has a single, simple zero $k=ik_1$ with some $k_1>0$ in $\overline{\mathbb C^+}$;
	\item
	$a_2(k)$ has a single, simple zero $k=0$ in $\overline{\mathbb C^-}$.
\end{itemize} 
Also, let $\gamma_1$ be given such that 
	$\gamma_1=e^{i\phi_1}$ with $\phi_1\in \mathbb R$.
	Then: 
	\begin{enumerate}[(1)]
		\item 
		$k_1$ is uniquely determined as $k_1=\frac{A}{2}$;
		\item
		The Riemann--Hilbert problem (i)--(iv) has a unique solution
		for all $(x,t)$ with $x\in \mathbb R$ and $t\ge 0$ except the set 
		$\cup_{n\in \mathbb Z}\{(0,t_n)\}$ with $t_n=\frac{\phi_1}{A^2} + \frac{2\pi}{A^2}n$;
		\item
		The associated exact solution $q(x,t)$ of problem (\ref{1}), (\ref{2.0}) is given by 
\begin{equation}
		\label{pure-sol}
q(x,t)=\frac{A}{1-e^{-Ax-iA^2t+i\phi_1}}.
\end{equation}
	\end{enumerate}
	
	\end{proposition}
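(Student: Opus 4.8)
The plan is to solve the Riemann--Hilbert problem (i)--(iv) explicitly, exploiting that $b\equiv 0$ makes it reflectionless. Part (1) is then immediate: since $b\equiv 0$ we have $\Re b(0)=0$ and $\ln\bigl(1-b(\zeta)\bar b(-\zeta)\bigr)\equiv 0$, so $E_1=E_2=1$ in \eqref{E1-E2}, and formula \eqref{k1-ngen} gives $k_1=\tfrac A2$ at once. (Equivalently, \eqref{traceng} with $b\equiv 0$ reads $a_1(k)=\tfrac{k-ik_1}{k}$, $a_2(k)=\tfrac{k}{k-ik_1}$, whence $a_{11}=-ik_1$, $\dot a_2(0)=\tfrac{i}{k_1}$, and \eqref{aab} with $\Re b(0)=0$ forces $k_1^2=A^2/4$.)

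For parts (2)--(3), note that $b\equiv 0$ makes the reflection coefficients \eqref{r12} vanish, so $J\equiv I$ in \eqref{jump} and, by \eqref{jRH}, $M(x,t,\cdot)$ continues to a function meromorphic on $\mathbb C$ whose only poles are a simple pole at $k=0$ (acting, by (iv), on the second column) and a simple pole at $k=ik_1=\tfrac{iA}{2}$ (acting, by (iii), on the first column, since $M^{(2)}=\Psi_2^{(2)}$ is regular there). With the normalization (ii), Liouville's theorem forces
\[
M(x,t,k)=I+\frac1k\begin{pmatrix}0&\alpha\\0&\beta\end{pmatrix}+\frac{1}{k-\tfrac{iA}{2}}\begin{pmatrix}\gamma&0\\\delta&0\end{pmatrix}
\]
with $(x,t)$-dependent scalars $\alpha,\beta,\gamma,\delta$, and $q(x,t)=2i\alpha$ by \eqref{sol}. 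Put $\mathcal E:=e^{-Ax-iA^2t+i\phi_1}$. Using $\dot a_1(\tfrac{iA}{2})=-\tfrac{2i}{A}$ and $e^{-2k_1x-4ik_1^2t}=e^{-Ax-iA^2t}$, the residue condition \eqref{res} becomes $\gamma=\alpha\mathcal E$ and $\delta=\bigl(\tfrac{iA}{2}+\beta\bigr)\mathcal E$. The remaining two relations come from the $k=0$ structure \eqref{nz}: it states that $M^{(1)}(0)$ and $\operatorname{Res}_{k=0}M^{(2)}=(\alpha,\beta)^{T}$ are both proportional to $(v_1(x,t),v_2(x,t))^{T}$, with ratios $\tfrac1{a_{11}}$ and $\tfrac{2i}{A\dot a_2(0)}$ respectively; since $a_{11}=-\tfrac{A^2}{4}\dot a_2(0)$ (relation \eqref{aab} with $b\equiv 0$), this amounts to $(\alpha,\beta)^{T}=-\tfrac{iA}{2}M^{(1)}(0)$, i.e. $\alpha=\gamma-\tfrac{iA}{2}$ and $\beta=\delta$.

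Solving, $\gamma=\alpha\mathcal E$ together with $\alpha=\gamma-\tfrac{iA}{2}$ gives $\alpha(1-\mathcal E)=-\tfrac{iA}{2}$; hence, provided $\mathcal E\neq 1$, $\alpha=\tfrac{-iA}{2(1-\mathcal E)}$, $\gamma=\alpha\mathcal E$, $\beta=\delta=-\alpha\mathcal E$, and $q(x,t)=2i\alpha=\tfrac{A}{1-\mathcal E}$, which is \eqref{pure-sol}. One then checks that this $M$ indeed solves (i)--(iv): jump and normalization are clear, the residue and $k=0$ relations hold by construction (with $v_1=a_{11}M_{11}(0)$, $v_2=a_{11}M_{21}(0)$), $\det M\equiv 1$ is automatic (the proportionality $M^{(1)}(0)\parallel\operatorname{Res}_{k=0}M^{(2)}$ cancels the potential pole of $\det M$ at $0$, and the residue condition cancels the one at $\tfrac{iA}{2}$), and $q\to 0$ as $x\to-\infty$ (where $\mathcal E\to\infty$), $q\to A$ as $x\to+\infty$ (where $\mathcal E\to 0$), so \eqref{2.0} holds. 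Uniqueness, whenever a solution exists, is the general uniqueness statement already proved in the excerpt; and a solution exists precisely when $\mathcal E\neq 1$, because for $\mathcal E=1$ the equation $\alpha(1-\mathcal E)=-\tfrac{iA}{2}$ is unsolvable. Finally $\mathcal E=1$ means $e^{-Ax-iA^2t+i\phi_1}=1$, whose real part gives $x=0$ and imaginary part gives $A^2t-\phi_1\in2\pi\mathbb Z$, i.e. $(x,t)=(0,t_n)$ with $t_n=\tfrac{\phi_1}{A^2}+\tfrac{2\pi}{A^2}n$.

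I expect the main obstacle to be extracting the full content of the $k=0$ conditions \eqref{nz}: one must see that they supply not merely the parallelism $M^{(1)}(0)\parallel\operatorname{Res}_{k=0}M^{(2)}$ but also the exact proportionality constant, so that, together with \eqref{res}, the four coefficients are uniquely pinned down; and the bookkeeping of the several normalization constants and signs ($\dot a_1(ik_1)$, $a_{11}$, $\dot a_2(0)$, and the exponent $-2k_1x-4ik_1^2t$) has to be done carefully. A last, minor point is the compatibility of the $k=0$ data with the reflection $x\mapsto -x$ built into \eqref{nz} — equivalently, that $-2i\overline{\delta}$, which equals $q(-x,t)$ by \eqref{sol1}, coincides with \eqref{pure-sol} evaluated at $-x$; this follows from Proposition~\ref{lemma-sym} (or by a direct computation).
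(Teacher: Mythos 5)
Your proposal is correct and follows essentially the same route as the paper: with $b\equiv 0$ the matrix $M$ is meromorphic, the $k=0$ singularity conditions collapse (via $a_{11}=-\tfrac{A^2}{4}\dot a_2(0)$, equivalently $v_1(x,t)=-\overline{v_2}(-x,t)$) to the residue condition $\operatorname{Res}_{k=0}M^{(2)}=\tfrac{A}{2i}M^{(1)}(0)$, and Liouville plus the two residue conditions pin down the rational representation, yielding $k_1=\tfrac{A}{2}$, the singular set $\{(0,t_n)\}$, and formula (\ref{pure-sol}). Your explicit four-coefficient linear solve is just a slightly more detailed version of the paper's representation (\ref{M-sol})--(\ref{v1-sol}), and all your constants ($\dot a_1(ik_1)=-\tfrac{2i}{A}$, $a_{11}=\tfrac{A}{2i}$, $\dot a_2(0)=\tfrac{2i}{A}$) check out.
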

	
\begin{proof}
Since $b(0)=0$, we are in Case II, and thus Item 1 follows 
from Proposition \ref{zero-nongeneric}, (ii). 
Moreover, (\ref{traceng}) gives 

\begin{equation}
\label{sp}
a_1(k)=\frac{k-i\frac{A}{2}}{k},\qquad a_2(k)=\frac{k}{k-i\frac{A}{2}}
\end{equation}
and thus the constants involved in (\ref{nz}) are as follows:
\[
a_{11}=\frac{A}{2i}, \qquad \dot a_2(0)=\frac{2i}{A}.
\]

Now notice that since $b(k)\equiv 0$, it follows that $M(\cdot, \cdot, k)$ is a meromorphic 
(in $\mathbb C$) function with the only pole at $k=ik_1$. Then, 
comparing (\ref{n+i0}) and (\ref{n-i0}), it follows that 
 $v_1(x,t)=-\bar v_2(-x,t)$ and thus
the singularity  conditions (\ref{nz}) reduce to a conventional residue condition:
\begin{equation}
\label{res0}
\underset{k=0}{\operatorname{Res}}\  M^{(2)}(x,t,k)=
\frac{A}{2i} M^{(1)}(x,t,0).
\end{equation}
Further, taking into account the original residue condition (\ref{res})
and the normalization condition (ii), we arrive at the 
following representation for $M$:
 \begin{equation}
\label{M-sol}
M(x,t,k) = \begin{pmatrix}
\frac{k+v_1(x,t)}{k-\frac{iA}{2}}  &  \frac{v_1(x,t)}{k} \\
\frac{-\bar v_1(-x,t)}{k-\frac{iA}{2}} & \frac{k-\bar v_1(-x,t)}{k}
\end{pmatrix},
\end{equation}
where $v_1(x,t)$ is determined using (\ref{res}):
\begin{equation}
\label{v1-sol}
v_1(x,t)=\frac{A}{2i} \frac{1}{1-e^{-Ax-iA^2t+i\phi_1}}.
\end{equation}
Particularly, this determines the singularity set as the set of zeros of the denominator in 
(\ref{v1-sol}). 
Finally, using (\ref{sol}) or (\ref{sol1}), the soliton formula (\ref{pure-sol}) follows.
\end{proof}

\section{The long-time asymptotics}

The shock-type long-time asymptotics for the local NLS equation with the step-like
boundary conditions (\ref{2}), (\ref{3}) was presented in \cite{BKS}, 
where it was shown that there were always three sectors in the $(x,t)$ half-plane ($t>0$)
characterized by qualitatively different asymptotic behavior: the decaying sector
(where the order of decay of $q$ is $O(t^{-1/2})$), the sector of modulated elliptic wave,
and the sector of modulated plane wave. Particularly, if $B=0$, then the modulated elliptic wave
occupies the sector $0<\frac{x}{t} < 8\sqrt{2} A$. 

It is natural to compare this behavior with the asymptotics for the nonlocal NLS equation
with the same type of the initial data (\ref{1-b}), (\ref{1-c}).
This motivate us to study, in  this Section, the long-time asymptotics of the solution of the initial value
 problem (\ref{1}),
(\ref{2.0}). Our analysis is based on the adaptation of 
 the nonlinear steepest-decent method \cite{DZ} to the (oscillatory) RH problem (i)--(iv). 
The implementation of the method in our case has some specific features: particularly, 
we have to deal with a singularity on the contour,  and the jump $1+r_1(k)r_2(k)$ in the scalar RH problem for $\delta(\xi,k)$ (see (\ref{13}) below) is not, in general, real-valued.

We will show that a basic difference of the asymptotics for the nonlocal NLS equation
being compared with that for the local NLS is that, while there are still the sector of decay
and the sector of ``modulated constant'', there is 
no an intermediate sector  between these two (although a transition zone
between these sectors may exist, being characterized by a specific asymptotics along curves converging to the ray $x=0$, $t>0$).

\subsection{Jump factorizations}

First, notice that in view of  (\ref{sol}) and (\ref{sol1}), 
studying the RH problem for $x>0$
is sufficient for studying $q(x,t)$ for all $(x,t)$ outside the sector $|x/t|<\varepsilon$
for any $\varepsilon>0$.

Introduce the  variable $\xi:=\frac{x}{4t}$ and the phase function
\begin{equation}\label{theta}
\theta(k,\xi)=4k\xi+2k^2.
\end{equation}
The jump matrix (\ref{jump}) allows, similarly to  \cite{RS}, two triangular factorizations:
\begin{subequations}\label{tr}
\begin{equation}
\label{tr1}
J(x,t,k)=
\begin{pmatrix}
1& 0\\
\frac{r_1(k)}{1+ r_1(k)r_2(k)}e^{2it\theta}& 1\\
\end{pmatrix}
\begin{pmatrix}
1+ r_1(k)r_2(k)& 0\\
0& \frac{1}{1+ r_1(k)r_2(k)}\\
\end{pmatrix}
\begin{pmatrix}
1& \frac{ r_2(k)}{1+ r_1(k)r_2(k)}e^{-2it\theta}\\
0& 1\\
\end{pmatrix}
\end{equation}
and
\begin{equation}
\label{tr2}
J(x,t,k)=
\begin{pmatrix}
1& r_2(k)e^{-2it\theta}\\
0& 1\\
\end{pmatrix}
\begin{pmatrix}
1& 0\\
r_1(k)e^{2it\theta}& 1\\
\end{pmatrix}.
\end{equation}
\end{subequations}
Since the  phase function $\theta(k,\xi)$ is the same as in the case of the local NLS,
its  signature table (see Figure \ref{signtable}) suggests us to follow the conventional
 steps \cite{DZ, DIZ}
involving (i) getting rid of the diagonal factor in (\ref{tr1}) and (ii) 
the deformation of the 
original RH problem (relative to the real axis) to a new one, relative to a cross, where the jump matrix converges,
as $t\to\infty$,
to the identity matrix uniformly away from any vicinity of the stationary phase point $k=-\xi$.
But when following this scheme, 
we have to pay a special attention to the singularity point  $k=0$.

\begin{figure}
\centering{\includegraphics[scale=0.3]{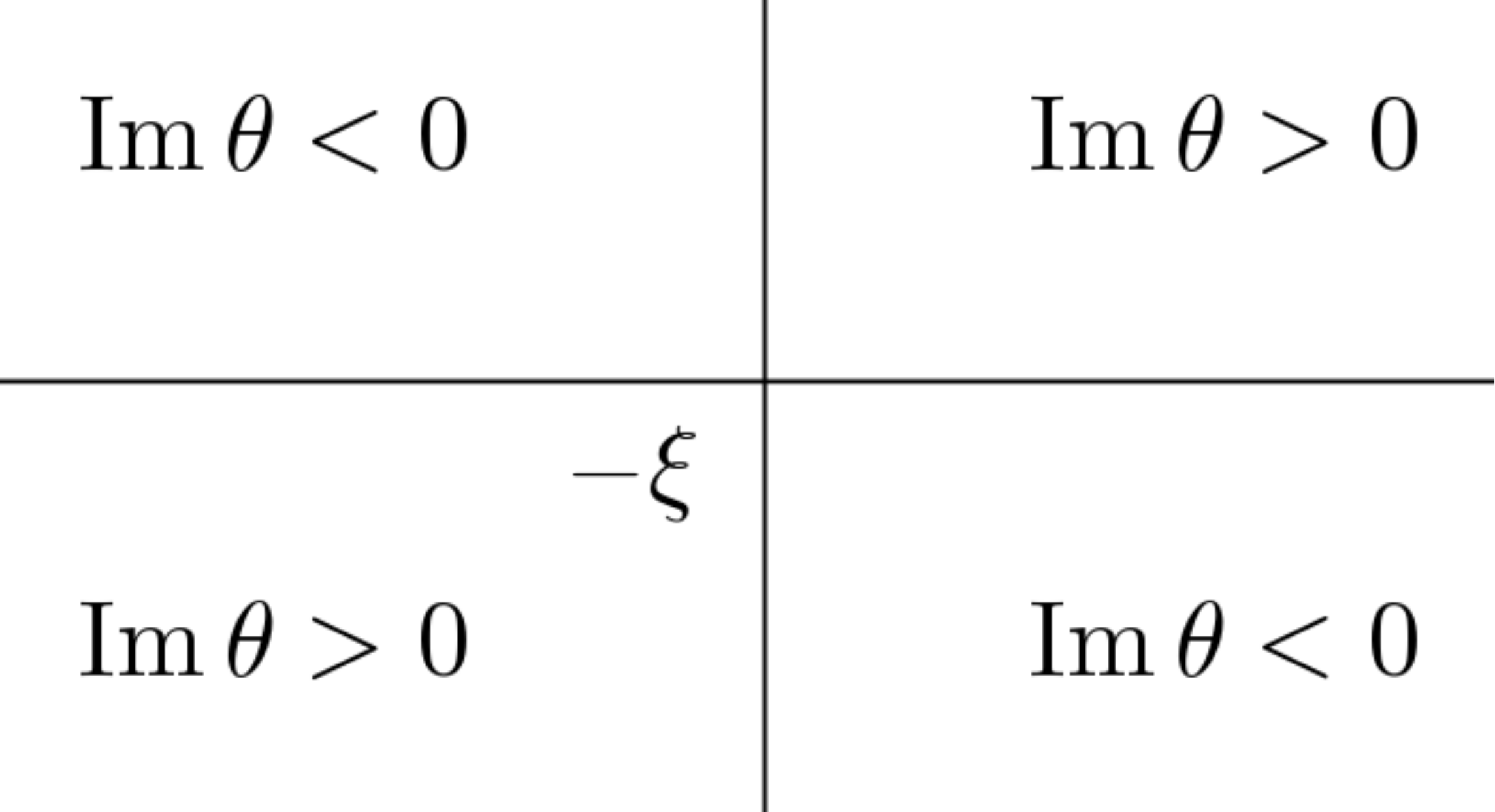}}
\caption{Signature table}
\label{signtable}
\end{figure}

First, introduce $\delta(\xi,k)$ as the solution of the scalar RH problem: find
$\delta(\xi,k)$ analytic in ${\mathbb C}\setminus (-\infty,-\xi]$ and satisfying 
the conditions
\begin{equation}
\label{13}
\begin{cases}
\delta_+(\xi,k)=\delta_-(\xi,k)(1+r_1(k)r_2(k)),\,& k\in(-\infty,-\xi),\\
\delta(\xi,k)\rightarrow 1, \,& k\rightarrow\infty.
\end{cases}
\end{equation}
Its  solution  is given by the Cauchy-type integral:
\begin{equation}
\label{14}
\delta(\xi,k)=\exp\left\{\frac{1}{2\pi i}\int_{-\infty}^{-\xi}\frac{\ln(1+r_1(\zeta)r_2(\zeta))}{\zeta-k}\,d\zeta\right\}
\end{equation}
(notice that since we deal with $\xi>0$, the behavior of $r_j(k)$ at $k=0$ does not affect 
$\delta(\xi,k)$). 
Then define $\tilde M$ with the help of $\delta$:
\begin{equation}\label{ft}
\tilde{M}(x,t,k)=M(x,t,k)\delta^{-\sigma_3}(\xi,k).
\end{equation}

Notice that in the case of the pure-step initial data (\ref{step-ini}), $1+r_1(k)r_2(k)=\frac{4k^2}{4k^2+A^2}$ (see Remark \ref{r1}), and thus $1+r_1(k)r_2(k)$  is real-valued. However, in the general case, 
$1+r_1(k)r_2(k)$ can take complex values, which may cause $\delta(\xi,k)$ to be singular at 
$k=-\xi$ (cf. \cite{RS}).

Indeed, $\delta(\xi,k)$ can be written as 
\begin{equation}\label{delta-singular}
\delta(\xi,k)=
(\xi+k)^{i\nu(-\xi)}e^{\chi(\xi,k)},
\end{equation}
where
\begin{equation}\label{as5.5}
\chi(\xi,k):=-\frac{1}{2\pi i}\int_{-\infty}^{-\xi}\ln(k-\zeta)d_{\zeta}\ln(1+ r_1(\zeta)r_2(\zeta))
\end{equation}
and 
\begin{equation}\label{nu}
\nu(-\xi):=-\frac{1}{2\pi}\ln(1+r_1(-\xi)r_2(-\xi))
= -\frac{1}{2\pi}\ln|1+r_1(-\xi)r_2(-\xi)|-
\frac{i}{2\pi}\Delta(-\xi),
\end{equation}
with 
\[
\Delta(-\xi):=\int_{-\infty}^{-\xi}d\arg(1+ r_1(\zeta)r_2(\zeta)).
\]

In what follows we will assume 
 that 
\begin{equation}\label{arg-ass}
\Delta(k)\in(-\pi,\pi)\qquad \text{for all}\ \ k\in(-\infty,0)
\end{equation}
and thus $|\Im \nu(k)|<\frac{1}{2}$.
In this case, $\ln(1+r_1(k)r_2(k))$ is single-valued, and  the singularity
of $\delta(\xi,k)$ (as well as of $\tilde M(x,t,k)$) at $k=-\xi$ is square integrable. 
More importantly, assumption (\ref{arg-ass}) will allow us to establish correct 
estimates, see (\ref{as-sol}) in Theorem \ref{th1}, i.e. the estimates 
with main terms dominating  the 
error ones.

Assumption (\ref{arg-ass}) obviously holds 
in the case of the pure-step initial data (\ref{step-ini}):
 in this case, 
$\Delta(k)\equiv0$ for $k\in(-\infty,0)$.
With this respect, this assumption holds, particularly, 
if the initial data are small $L^1$-perturbations of $q_{0A}(x)$;
we have already remarked  on this aspect when formulating the 
conditions for Case I and Case II above.

Function $\tilde{M}(x,t,k)$ defined by (\ref{ft}) satisfies the  RH problem specified by  
the jump, normalization, and residue conditions:  
\begin{subequations}\label{tildeM}
\begin{equation}
\label{14.1}
\tilde{M}_+(x,t,k)=\tilde{M}_-(x,t,k)\tilde{J}(x,t,k), \qquad k\in\mathbb{R}\setminus\{0\},
\end{equation}
\begin{equation}
\label{14.1-norm}
\tilde{M}(x,t,k)\rightarrow I, \qquad k\rightarrow\infty,
\end{equation}
\begin{equation}
\label{14.4}
\underset{k=ik_1}{\operatorname{Res}} \tilde{M}^{(1)}(x,t,k)=
\frac{\gamma_1}{\dot{a}_1(ik_1)\delta^{2}(\xi,ik_1)}
e^{-2k_1x-4ik_1^2t}\tilde{M}^{(2)}(x,t,ik_1),\quad |\gamma_1|=1,
\end{equation}
where 
\begin{equation}
\label{as4}
\tilde{J}(x,t,k)=
\begin{cases}
\begin{pmatrix}
1& 0\\
\frac{r_1(k)\delta_-^{-2}(\xi,k)}{1+r_1(k)r_2(k)}e^{2it\theta}& 1\\
\end{pmatrix}
\begin{pmatrix}
1& \frac{r_2(k)\delta_+^{2}(\xi,k)}{1+r_1(k)r_2(k)}e^{-2it\theta}\\
0& 1\\
\end{pmatrix},\,& k\in(-\infty,-\xi),
\\
\begin{pmatrix}
1& r_2(k)\delta^2(\xi,k)e^{-2it\theta}\\
0& 1\\
\end{pmatrix}
\begin{pmatrix}
1& 0\\
r_1(k)\delta^{-2}(\xi,k)e^{2it\theta}& 1\\
\end{pmatrix},\,& k\in(-\xi,\infty)\setminus\{0\},
\end{cases}
\end{equation}
supplemented by the singularity conditions at $k=0$:
\begin{align}
\label{14.2}
&\tilde{M}_+(x,t,k)=
\begin{pmatrix}
\frac{4v_1(x,t)}{A^2a_2(0)\delta(\xi,0)}& -\delta(\xi,0)\overline{v_2}(-x,t)\\
\frac{4v_2(x,t)}{A^2a_2(0)\delta(\xi,0)}& -\delta(\xi,0)\overline{v_1}(-x,t)
\end{pmatrix}
(I+O(k))
\begin{pmatrix}
k& 0\\
0& \frac{1}{k}
\end{pmatrix}, &
k\rightarrow +i0,\\
\label{14.3}
&\tilde{M}_-(x,t,k)=\frac{2i}{A}
\begin{pmatrix}
\frac{-\overline{v_2}(-x,t)}{\delta(\xi,0)}& \delta(\xi,0)\frac{v_1(x,t)}{a_2(0)}\\
\frac{-\overline{v_1}(-x,t)}{\delta(\xi,0)}& \delta(\xi,0)\frac{v_2(x,t)}{a_2(0)}
\end{pmatrix}
+O(k), &
k\rightarrow -i0,
\end{align}
in Case I, and 
\begin{align}
& \tilde M_+(x,t,k)=
\begin{pmatrix}
\frac{v_1(x,t)}{a_{11}\delta(\xi,0)}& 
-\delta(\xi,0)\overline{v_2}(-x,t)\\
\frac{v_2(x,t)}{a_{11}\delta(\xi,0)}& 
-\delta(\xi,0)\overline{v_1}(-x,t)
\end{pmatrix}
(I+O(k))
\begin{pmatrix}
1& 0\\
0& \frac{1}{k}
\end{pmatrix}, & k\rightarrow +i0,\\
& \tilde M_-(x,t,k)=\frac{2i}{A}
\begin{pmatrix}
-\frac{\overline{v_2}(-x,t)}{\delta(\xi,0)}& \delta(\xi,0)\frac{v_1(x,t)}{\dot{a}_2(0)}\\
-\frac{\overline{v_1}(-x,t)}{\delta(\xi,0)}& \delta(\xi,0)\frac{v_2(x,t)}{\dot{a}_2(0)}
\end{pmatrix}
(I+O(k))
\begin{pmatrix}
1& 0\\
0& \frac{1}{k}
\end{pmatrix}, & k\rightarrow -i0,
\end{align}
in Case II.
\end{subequations}

\subsection{RH problem deformations}\label{ss:deform}

Notice that similarly to the case of the NLS equation, 
assuming that $\int_{-\infty}^0 |q_0(x)|dx <\infty$ and 
$\int^\infty_0 |q_0(x)-A|dx <\infty$,
the reflection coefficients
 $r_j(k)$, $j=1,2$, are defined, in general, for $k\in\mathbb{R}$ only
 (see Propositions \ref{prop1} and \ref{properties}).
On the other hand, in the large-$t$ analysis of $\tilde M(x,t,k)$, 
 it is advantageous to 
 have $r_j(k)$ continued, as meromorphic functions, into $\mathbb{C}$;
 then this will allow us to proceed with
 appropriate RH problem deformations.
 Otherwise $r_j(k)$ and  $\frac{r_j(k)}{1+ r_1(k)r_2(k)}$ have to be approximated 
  by some rational functions with well-controlled errors (see, e.g., \cite{DIZ,Len15}).

 For clarity's sake, in what follows we will assume that
 the initial data $q_0(x)$ are a compact
perturbation of the pure step initial data  $q_{0A}(x)$ (\ref{step-ini}), which guarantees 
  that all $\Psi_l^m(x,0,k)$, $l,m=1,2$ (see Proposition \ref{prop1}) and thus $r_j(k)$ are meromorphic 
 in $\mathbb{C}$. 
 Then we 
  define $\hat{M}(x,t,k)$ as follows 
(see Figure \ref{F1}):
\begin{figure}[ht]
\centering{
\includegraphics[scale=0.3]{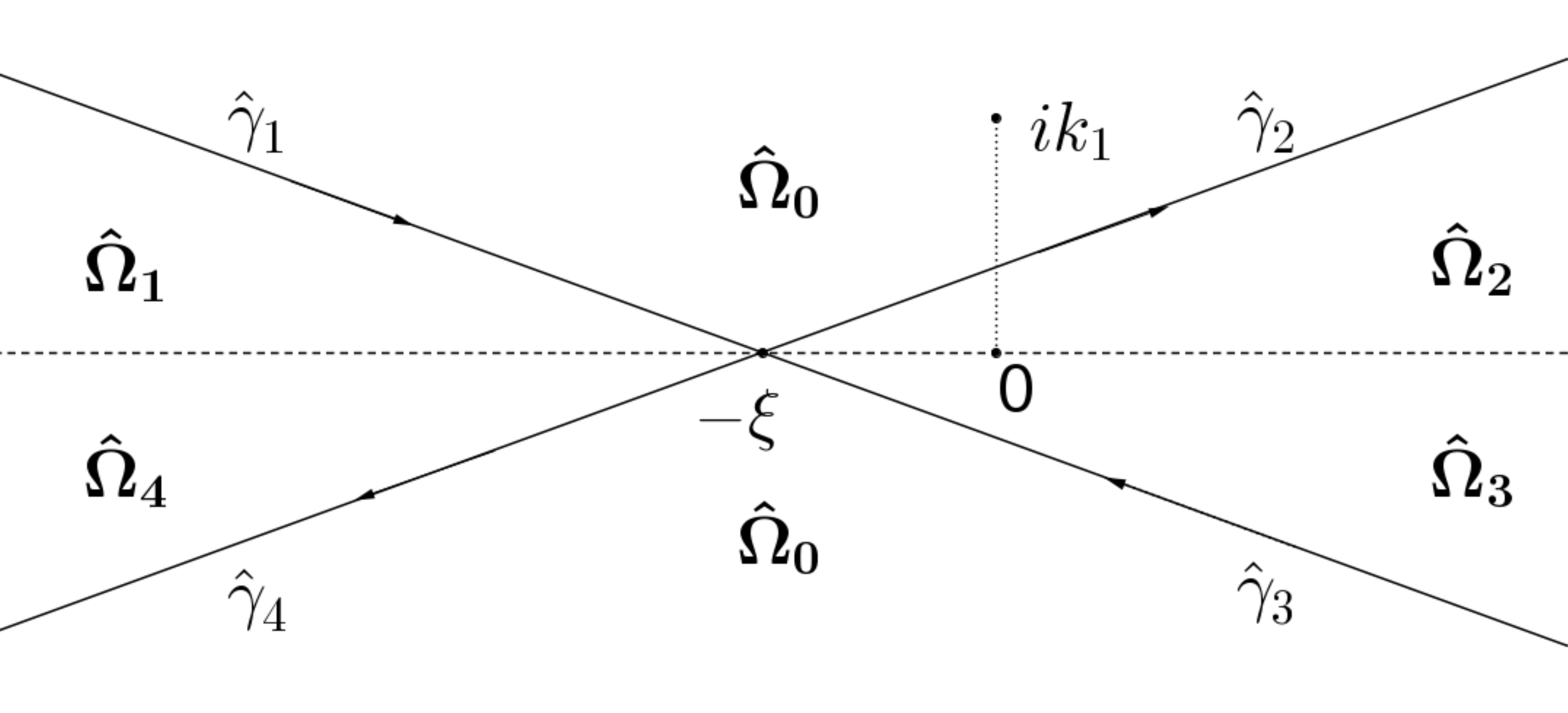}}
\caption{Contour $\hat\Gamma=\hat\gamma_1\cup...\cup\hat\gamma_4$}
\label{F1}
\end{figure}

\begin{equation}
\hat{M}(x,t,k)=
\begin{cases}
\tilde{M}(x,t,k),& k\in\hat\Omega_0,\\
\tilde{M}(x,t,k)
\begin{pmatrix}
1& \frac{-r_2(k)\delta^{2}(\xi,k)}{1+r_1(k)r_2(k)}e^{-2it\theta}\\
0& 1\\
\end{pmatrix}
,& k\in\hat\Omega_1,
\\
\tilde{M}(x,t,k)
\begin{pmatrix}
1& 0\\
-r_1(k)\delta^{-2}(\xi,k)e^{2it\theta}& 1\\
\end{pmatrix}
,& k\in\hat\Omega_2,
\\
\tilde{M}(x,t,k)
\begin{pmatrix}
1& r_2(k)\delta^2(\xi,k)e^{-2it\theta}\\
0& 1\\
\end{pmatrix}
,& k\in\hat\Omega_3,
\\
\tilde{M}(x,t,k)
\begin{pmatrix}
1& 0\\
\frac{r_1(k)\delta^{-2}(\xi,k)}{1+r_1(k)r_2(k)}e^{2it\theta}& 1\\
\end{pmatrix}
,& k\in\hat\Omega_4.
\end{cases}
\end{equation}
Here the angles between the rays $\hat\gamma_j=\hat\gamma_j(\xi)$ and the real axis 
are such that the point $ik_1$ is located in the sector $\hat\Omega_0$.
Then $\hat M(x,t,k)$ satisfies the  RH problem with the jump across 
$\hat\Gamma=\hat\gamma_1\cup...\cup\hat\gamma_4$:
\begin{subequations}\label{RHhat}
\begin{equation}
\label{15.1}
\hat{M}_+(x,t,k)=\hat{M}_-(x,t,k)\hat{J}(x,t,k),\qquad k\in\hat\Gamma
\end{equation}
with 
\begin{equation}
\label{J-hat}
\hat{J}(x,t,k)=
\begin{cases}
\begin{pmatrix}
1& \frac{r_2(k)\delta^{2}(\xi,k)}{1+r_1(k)r_2(k)}e^{-2it\theta}\\
0& 1\\
\end{pmatrix}
,& k\in\hat\gamma_1,
\\
\begin{pmatrix}
1& 0\\
r_1(k)\delta^{-2}(\xi,k)e^{2it\theta}& 1\\
\end{pmatrix}
,& k\in\hat\gamma_2,
\\
\begin{pmatrix}
1& -r_2(k)\delta^2(\xi,k)e^{-2it\theta}\\
0& 1\\
\end{pmatrix}
,& k\in\hat\gamma_3,
\\
\begin{pmatrix}
1& 0\\
\frac{-r_1(k)\delta^{-2}(\xi,k)}{1+r_1(k)r_2(k)}e^{2it\theta}& 1\\
\end{pmatrix}
,& k\in\hat\gamma_4,
\end{cases}
\end{equation}
the normalization 
\begin{equation}
\label{15.2}
\hat{M}(x,t,k)\rightarrow I, \qquad k\rightarrow\infty,
\end{equation}
and the residue condition
\begin{equation}
\label{15.3}
\underset{k=ik_1}{\operatorname{Res}} \hat{M}^{(1)}(x,t,k)=
c_1(x,t)\hat{M}^{(2)}(x,t,ik_1),
\end{equation}
where 
\begin{equation}
\label{c1}
c_1(x,t)=\frac{\gamma_1}{\dot{a}_1(ik_1)\delta^{2}(\xi,ik_1)}
e^{-2k_1x-4ik_1^2t}\qquad \text{ with}\  |\gamma_1|=1.
\end{equation}
As for the singularity conditions at $k=0$, it is remarkable that 
 they reduce, in the both cases, to the same
residue condition having a conventional form
\begin{equation}
\label{res-convent}
\underset{k=0}{\operatorname{Res}}\  \hat{M}^{(2)}(x,t,k)=
c_0(\xi)\hat{M}^{(1)}(x,t,0)
\end{equation}
with (cf. (\ref{res0}))
\begin{equation}
\label{c0}
c_0(\xi)=\frac{A\delta^2(\xi,0)}{2i}.
\end{equation}
\end{subequations}

The  RH problem (\ref{RHhat}) involving two residue conditions (\ref{15.3}) and 
(\ref{res-convent})
can be reduced to a regular RH problem (without residue conditions)
 by using the  Blaschke-Potapov factors (see, e.g., \cite{FT}):

\begin{proposition}
\label{propsol}
The solution $q(x,t)$ of the IV problem (\ref{1}), (\ref{2.0}) 
can be represented as follows:
\begin{subequations}
\label{sol+0}
\begin{align}
\label{sol+0a}
q(x,t)&=-2k_1P_{12}(x,t)+
2i\lim\limits_{k\to\infty}k\hat{M}^{R}_{12}(x,t,k),
\quad x>0,\\
q(x,t)&=-2k_1\overline{P_{21}(-x,t)}
-2i\lim\limits_{k\to\infty}
k\overline{\hat{M}^{R}_{21}(-x,t,k)},\quad x<0.
\end{align}
\end{subequations}
Here (i) $\hat{M}^{R}(x,t,k)$ solves the regular Riemann-Hilbert problem:
\begin{subequations}\label{RHR}
\begin{equation}
\begin{cases}
\hat{M}^R_+(x,t,k)=\hat{M}^R_-(x,t,k)\hat{J}^R(x,t,k),& k\in\hat\Gamma,\\
\hat{M}^R(x,t,k)\rightarrow I, & k\rightarrow\infty,
\end{cases}
\end{equation}
with 
\begin{equation}
\label{J^R}
\hat{J}^R(x,t,k)=
\begin{pmatrix}
1& 0\\
0& \frac{k-ik_1}{k}
\end{pmatrix}
\hat{J}(x,t,k) 
\begin{pmatrix}
1& 0\\
0& \frac{k}{k-ik_1}
\end{pmatrix},\quad k\in\hat{\Gamma}
\end{equation}
\end{subequations}
and (ii) $P_{12}$ and $P_{21}$ are determined in terms of $\hat M^R$:
\begin{equation}
\label{P}
P_{12}(x,t)=\frac{g_1(x,t)h_1(x,t)}
{g_1(x,t)h_2(x,t)-g_2(x,t)h_1(x,t)},\,
P_{21}(x,t)=-\frac{g_2(x,t)h_2(x,t)}
{g_1(x,t)h_2(x,t)-g_2(x,t)h_1(x,t)},
\end{equation}
where $g(x,t)=\left(
\begin{smallmatrix}g_1(x,t)\\g_2(x,t) \end{smallmatrix}\right)$
and  $h(x,t)=\left(
\begin{smallmatrix}h_1(x,t)\\h_2(x,t) \end{smallmatrix}\right)$
are given by
\begin{subequations}\label{gh}
\begin{align}
g(x,t)&=ik_1\hat{M}^{R(1)}(x,t,ik_1)-
c_1(x,t)\hat{M}^{R(2)}(x,t,ik_1),\\
h(x,t)&=ik_1\hat{M}^{R(2)}(x,t,0)+
c_0(\xi)\hat{M}^{R(1)}(x,t,0).
\end{align}
\end{subequations}
\end{proposition}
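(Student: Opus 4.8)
The plan is to strip the two residue conditions (\ref{15.3}) and (\ref{res-convent}) out of the RH problem (\ref{RHhat}) by dressing $\hat M$ with a rational (Blaschke--Potapov) matrix factor, thereby passing to the \emph{regular} problem (\ref{RHR}), and then to read $q$ off from the behaviour at $k=\infty$. Concretely, I would look for the relation between $\hat M$ and $\hat M^R$ in the form
\begin{equation*}
\hat M(x,t,k)=P(x,t,k)\,\hat M^R(x,t,k)\begin{pmatrix}1&0\\ 0&\frac{k-ik_1}{k}\end{pmatrix},\qquad
P(x,t,k)=I+\frac{P_1(x,t)}{k-ik_1},
\end{equation*}
with $P_1(x,t)$ to be found. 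The diagonal factor on the right is exactly what converts $\hat J^R$ back into $\hat J$ -- this is (\ref{J^R}) read in reverse, since conjugation by $\mathrm{diag}(1,\tfrac{k-ik_1}{k})$ merely rescales the off-diagonal entries of $\hat J$ by powers of $\tfrac{k-ik_1}{k}$, analytic and non-vanishing on $\hat\Gamma$ -- and it simultaneously produces the required pole of $\hat M^{(2)}$ at $k=0$; the factor $P(k)$, with its pole at $k=ik_1$, produces the pole of $\hat M^{(1)}$ there. Since both $k=0$ and $k=ik_1$ lie off $\hat\Gamma$, $P$ does not disturb the jump, and $\hat M\to I$ as $k\to\infty$ automatically.

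The core of the argument is to pin down $P_1$. First, $\det\hat M\equiv\det\hat M^R\equiv1$ -- which follows from the unit determinant of the triangular jump matrices $\hat J$ and $\hat J^R$, the fact that soliton-type residue conditions leave $\det\hat M$ pole-free at $k=ik_1$ and $k=0$, and Liouville's theorem -- forces $\det P(k)=\tfrac{k}{k-ik_1}$, hence $\operatorname{tr}P_1=ik_1$ and $\det P_1=0$: thus $P_1$ is rank one with non-zero eigenvalue $ik_1$. Next, inserting the ansatz into (\ref{15.3}) and matching residues at $k=ik_1$ gives $P_1\,g(x,t)=0$, while inserting it into (\ref{res-convent}) and matching residues at $k=0$ gives $P_1\,h(x,t)=ik_1\,h(x,t)$, where $g$ and $h$ are precisely the vectors (\ref{gh}). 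A rank-one matrix with non-zero eigenvalue $ik_1$, kernel $\langle g\rangle$ and range $\langle h\rangle$ exists and is unique as soon as $g$ and $h$ are linearly independent, i.e.\ $g_1h_2-g_2h_1\neq0$; carrying out this linear algebra yields $(P_1)_{12}=ik_1P_{12}$ and $(P_1)_{21}=ik_1P_{21}$ with $P_{12},P_{21}$ as in (\ref{P}). One then checks directly that the $\hat M$ so defined satisfies every requirement of (\ref{RHhat}) -- jump, normalization, both residue conditions, and regularity at all other points, in particular at $k=0$, where the diagonal factor and the pole of $P$ combine so that only the prescribed singularity structure remains -- so by the uniqueness of that RH problem it is its solution.

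It remains to extract $q$. The chain of transformations $M\mapsto\tilde M\mapsto\hat M$ differs from the identity transformation only by the scalar $\delta(\xi,k)=1+O(k^{-1})$ and by triangular factors that tend to $I$ (exponentially fast, inside the sectors) as $k\to\infty$, so $q(x,t)=2i\lim_{k\to\infty}kM_{12}=2i\lim_{k\to\infty}k\hat M_{12}$ by (\ref{sol}). From the displayed relation, $\hat M_{12}=\tfrac{k-ik_1}{k}\hat M^R_{12}+\tfrac1k(P_1\hat M^R)_{12}$, whence $k\hat M_{12}\to\lim_{k\to\infty}k\hat M^R_{12}+(P_1)_{12}=\lim_{k\to\infty}k\hat M^R_{12}+ik_1P_{12}$ as $k\to\infty$; multiplying by $2i$ gives the $x>0$ formula in (\ref{sol+0}) (see (\ref{sol+0a})). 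Running the same computation on the $(2,1)$ entry and using (\ref{sol1}) at the point $-x>0$ together with $(P_1)_{21}=ik_1P_{21}$ gives the $x<0$ formula in (\ref{sol+0}).

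The main obstacle, I expect, is the compatibility of all the demands placed on the single rank-one matrix $P_1$: the constraints $\operatorname{tr}P_1=ik_1$, $\det P_1=0$ (from normalization), $P_1g=0$ (residue at $k=ik_1$) and $P_1h=ik_1h$ (residue at $k=0$) must be simultaneously satisfiable, which they are precisely because of the coupling between the pole at $ik_1$ and the zero at $k=0$ built into $\hat J^R$ in (\ref{J^R}); and, more delicately, the whole construction is available only where $g_1h_2-g_2h_1\neq0$. Pinning down this non-degeneracy -- identifying it with the set of $(x,t)$ for which $q$ itself is well-defined (recall that solutions of the NNLS equation can blow up in finite time) and checking that the exceptional set is as claimed -- is the real work; away from it, the remaining steps are the routine, if somewhat lengthy, verifications sketched above.
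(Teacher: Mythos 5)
Your proposal is correct and follows essentially the same route as the paper: the paper also removes the two residue conditions by a Blaschke--Potapov factor $B(x,t,k)=I+\frac{ik_1}{k-ik_1}P(x,t)$ (your $P_1=ik_1P$), with $P$ the rank-one projection having $\ker P=\mathrm{lin}_{\mathbb C}\{g\}$ and $\mathrm{Im}\,P=\mathrm{lin}_{\mathbb C}\{h\}$ --- exactly the conditions you obtain by matching residues at $k=ik_1$ and $k=0$ --- and then reads off (\ref{sol+0}) from the $O(1/k)$ term at infinity via (\ref{sol}), (\ref{sol1}). Your additional derivation of $\operatorname{tr}P_1=ik_1$, $\det P_1=0$ from the unit determinant, and your explicit remark on the non-degeneracy $g_1h_2-g_2h_1\neq0$, are consistent with (and slightly more detailed than) the paper's argument, which cites \cite{FT} for the factorization.
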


\begin{proof}
The solution $\hat{M}(x,t,k)$ of the Riemann-Hilbert problem (\ref{RHhat}) 
can be represented in terms of the solution $\hat{M}^R(x,t,k)$ of the regular RH problem
(\ref{RHR}) as follows \cite{FT}:
\begin{equation}
\hat{M}(x,t,k)=
B(x,t,k)\hat{M}^{R}(x,t,k)
\begin{pmatrix}
1& 0\\
0& \frac{k-ik_1}{k}\\
\end{pmatrix},\,k\in\mathbb{C},
\end{equation}
where  the Blaschke-Potapov factor $B$ has the form $B(x,t,k)=I+\frac{ik_1}{k-ik_1}P(x,t)$.
Here  $P(x,t)$ is a projection uniquely determined by the conditions
\begin{equation}
\ker P(x,t)=\mathrm{lin}_{\mathbb{C}}\left\{
g(x,t)\right\}\quad \text{and}\quad
\mathrm{Im}\,P(x,t)=\mathrm{lin}_{\mathbb{C}}\left\{
h(x,t)\right\},
\end{equation}
where $g(x,t)$ and $h(x,t)$ are given by (\ref{gh}): this  implies that 
 the $(12)$ and $(21)$ elements of $P$ are given by (\ref{P}) whereas  
\begin{equation}
P_{11}(x,t)=-\frac{P_{12}(x,t)g_2(x,t)}{g_1(x,t)}\quad\text{and }\ 
P_{22}(x,t)=-\frac{P_{21}(x,t)g_1(x,t)}{g_2(x,t)}.
\end{equation}
Finally, taking into account that 
\begin{equation}
\hat{M}(x,t,k)=
\begin{pmatrix}
1& 0\\
0& 1-\frac{ik_1}{k}
\end{pmatrix}
+\frac{ik_1}{k-ik_1}P(x,t)+\frac{\hat{M}^{R}_1(x,t)}{k}+
O\left(\frac{1}{k^2}\right),\quad k\to\infty
\end{equation}
where $\hat{M}^R(x,t,k)=I+\frac{\hat{M}^R_1(x,t)}{k}+
O\left(\frac{1}{k^2}\right)$, $k\to\infty$, and using (\ref{sol}) and (\ref{sol1}), 
the representations (\ref{sol+0}) follow.
\end{proof}

Therefore, using Proposition \ref{propsol}, the large-$t$ asymptotic analysis of $q(x,t)$ reduces 
to  that for a regular RH problem (\ref{RHR}). On the other hand, the latter problem
has the same form as in the case of the NNLS equation on the zero background, see \cite{RS}.
Consequently, one can follows the asymptotic approach, presented  in \cite{RS},
for obtaining the long-time asymptotics for $\hat{M}^R(x,t,k)$ at $k=ik_1$, $k=0$ 
(needed in (\ref{gh})), and for large $k$ (needed in (\ref{sol+0})), which will
finally lead to the long-time asymptotics of $q(x,t)$. 

Before formulating   detailed asymptotics, let us notice that the rough 
approximation \\
$\hat{M}^R(x,t,k)\approx I$ as $t\to\infty$ with $x/t\ge\varepsilon$ for any $\varepsilon>0$
(to avoid the possible singularity of $\delta(\xi,k)$ as $\xi\to 0$), being substituted into 
(\ref{gh}), gives the main term of the asymptotics of $q(x,t)$ with a
rough error estimate:

\begin{proposition}
\label{rough-as}
As $t\to\infty$,
\begin{equation}
\label{asr}
q(x,t)=A\delta^2(\xi,0)+o(1)\ \text{for}\  x>0\quad \text{and}\quad  q(x,t)=o(1)
\ \text{for}\  x<0
\end{equation}
along any ray $\xi=\frac{x}{4t}=const>0$ or   $\xi=const<0$.
\end{proposition}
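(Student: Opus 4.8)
The plan is to read off the leading term of $q(x,t)$ directly from the representation (\ref{sol+0})--(\ref{gh}) of the solution in terms of the regular RH problem (\ref{RHR}), feeding in the approximation $\hat M^R(x,t,k)\approx I$. The argument splits into two parts: (a) showing that, as $t\to\infty$ along a ray $\xi=\frac{x}{4t}=\mathrm{const}\neq0$, one has $\hat M^R(x,t,0)=I+o(1)$, $\hat M^R(x,t,ik_1)=I+o(1)$ and $\hat M^R_1(x,t)=o(1)$, where $\hat M^R(x,t,k)=I+\hat M^R_1(x,t)/k+O(k^{-2})$ as $k\to\infty$; and (b) the elementary substitution of these into (\ref{gh}), (\ref{P}) and (\ref{sol+0}).

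For part (a) I would apply the Deift--Zhou steepest-descent analysis to (\ref{RHR}); this is carried out in full in the subsequent subsections (closely following \cite{RS}), but here only an $o(1)$ bound is needed. Completing the square, $\theta(k,\xi)=2(k+\xi)^2-2\xi^2$, and the rays $\hat\gamma_j$ are chosen so that $\mp\Re(2it\theta)<0$ along them; hence the off-diagonal entries of $\hat J^R$ (cf. (\ref{J-hat}), (\ref{J^R})) are exponentially small in $t$ on $\hat\Gamma$ outside any fixed neighbourhood of the stationary phase point $k=-\xi$, while in a neighbourhood of $k=-\xi$ a standard parabolic-cylinder local parametrix --- which absorbs the mild power singularity (\ref{delta-singular}) of $\delta$, controlled by assumption (\ref{arg-ass}) --- contributes only $O(t^{-1/2+\varepsilon})$. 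Because $\xi=\mathrm{const}$ is bounded away from $0$, $\delta(\xi,k)$ is regular at $k=0$ and at $k=ik_1$, so these are harmless evaluation points and one obtains $\hat M^R=I+o(1)$ there, as well as $\hat M^R_1(x,t)=o(1)$.

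For part (b), note first that $c_1(x,t)=\gamma_1\bigl(\dot a_1(ik_1)\delta^2(\xi,ik_1)\bigr)^{-1}e^{-2k_1x-4ik_1^2t}$ satisfies $|c_1(x,t)|=O(e^{-2k_1x})$: along $\xi=\mathrm{const}>0$ one has $x=4\xi t\to+\infty$, so $c_1(x,t)\to0$ exponentially, while for $x<0$ one uses the reduction remark to replace the spatial argument by $-x>0$ (with associated $\tilde\xi=-\xi>0$), and again $c_1(-x,t)\to0$. Substituting $\hat M^R\approx I$ and $c_1\approx0$ into (\ref{gh}) gives $g_1=ik_1+o(1)$, $g_2=-c_1+o(1)=o(1)$, $h_1=c_0(\xi)+o(1)$, $h_2=ik_1+o(1)$, whence $g_1h_2-g_2h_1=-k_1^2+o(1)\neq0$ and, by (\ref{P}), $P_{12}(x,t)=-\frac{i\,c_0(\xi)}{k_1}+o(1)$ while $P_{21}(-x,t)=o(1)$. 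Feeding these into (\ref{sol+0}), together with $\lim_{k\to\infty}k\hat M^R_{12}=o(1)$ and $\lim_{k\to\infty}k\hat M^R_{21}=o(1)$, and using $c_0(\xi)=\frac{A\delta^2(\xi,0)}{2i}$, we get $q(x,t)=-2k_1P_{12}(x,t)+o(1)=2i\,c_0(\xi)+o(1)=A\delta^2(\xi,0)+o(1)$ for $x>0$, and $q(x,t)=o(1)$ for $x<0$, which is (\ref{asr}).

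The only nontrivial point is the justification in part (a): near $k=-\xi$ the jump of (\ref{RHR}) does not tend to the identity and $\delta(\xi,k)$ is singular there, so a genuine local-parametrix construction is required. This, however, is precisely the machinery developed (after \cite{RS}) for the detailed asymptotics below, and since only an $o(1)$ error is claimed here one may invoke it in its crudest form; part (b) is then a short computation. It is worth stressing that the exponential decay of $c_1(x,t)$ along the rays $\xi=\mathrm{const}\neq0$ is exactly what removes any soliton contribution from the leading term away from the transition ray $x=0$, consistently with the one-soliton formula (\ref{pure-sol}).
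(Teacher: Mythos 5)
Your proposal is correct and follows essentially the same route as the paper: the paper also justifies $\hat M^R\approx I$ by the steepest-descent machinery (deferred to the detailed analysis following \cite{RS}) and then substitutes this, together with the exponential decay of $c_1$ along rays $\xi=\mathrm{const}\neq 0$, into (\ref{gh}), (\ref{P}) and (\ref{sol+0}) to read off $q\approx 2ic_0(\xi)=A\delta^2(\xi,0)$ for $x>0$ and $q\approx 0$ for $x<0$. Your part (b) computation reproduces the paper's verbatim, and your part (a) merely spells out the justification the paper leaves implicit.
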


Indeed, $\hat{M}^R(x,t,k)\approx I$ implies that 
$\left(\begin{smallmatrix}g_1(x,t)\\g_2(x,t) \end{smallmatrix}\right) \approx
 \left(\begin{smallmatrix}ik_1\\ -c_1(x,t) \end{smallmatrix}\right)
\approx
\left(\begin{smallmatrix}ik_1\\ 0 \end{smallmatrix}\right)$
and $\left(\begin{smallmatrix}h_1(x,t)\\h_2(x,t) \end{smallmatrix}\right) \approx
 \left(\begin{smallmatrix}c_0(\xi)\\ ik_1 \end{smallmatrix}\right)$.
Accordingly, 
for $x>0$ we have 
\[
q(x,t) \approx -2k_1P_{12}(x,t) \approx -2k_1\frac{ik_1 c_0(\xi)}{-k_1^2+c_0(\xi)c_1(x,t)}
\approx 2ic_0(\xi)=A\delta^2(\xi,0)
\]
whereas for $x<0$ we have
\[
q(x,t) \approx -2k_1\overline{P_{21}(-x,t)} \approx 2k_1\frac{-\bar c_1(-x,t)(-ik_1)}{-k_1^2+\bar c_0(-\xi) \bar c_1(-x,t)} \approx 0.
\]

Our main results  make (\ref{asr}) more precise. 

\begin{theorem}
\label{th1}
Consider the Cauchy problem (\ref{1}), (\ref{2.0}),
where 
the initial data $q_0(x)$ 
is a compact
perturbation of the pure step initial data (\ref{step-ini}):
$q_0(x)- q_{0A}(x)=0$ for  $|x|>N$ with some $N>0$.
 Assume that the spectral functions
associated with $q_0(x)$ via (\ref{8.1})--(\ref{8.4}) are such that:
\begin{enumerate}[(a)]
\item 
$a_1(k)$ has a single, simple zero in $\overline{\mathbb{C}^{+}}$  at $k=ik_1$, 
 and $a_2(k)$ either has no zeros in $\overline{\mathbb{C}^{-}}$ or has a single, simple zero
at $k=0$.
\item
$\Im \nu(-\xi)\in \left( -\frac{1}{2}, \frac{1}{2} \right)$ for all $\xi>0$, 
where $\Im\nu(-\xi)= -\frac{1}{2\pi}\int_{-\infty}^{-\xi} d \arg(1+ r_1(\zeta)r_2(\zeta))$,
$r_1(k)=\frac{b(k)}{a_1(k)}$, $r_2(k)=\frac{\overline{b(-{k})}}{a_2(k)}$.
\end{enumerate}
Assuming that the solution $q(x,t)$ of (\ref{1}), (\ref{2.0}) exists, its
long-time asymptotics along any line 
$\xi=\frac{x}{4t}=const\ne 0$
is as follows:
\begin{subequations}\label{as-sol}
\begin{description}
\item[(i)] for 
$x<0:$
\begin{equation}
\label{as-sol-1}
q(x,t)=t^{-\frac{1}{2}-\Im\nu(\xi)}\alpha_1(\xi)
\exp\left\{4it\xi^2-i\Re\nu(\xi)\ln t\right\}
+R_1(-\xi,t),
\end{equation}
\item[(ii)] for $x>0$, three types of asymptotics are possible, 
depending on the value of $\Im\nu(-\xi)$:
\begin{description}
\item[(a)] if $\Im\nu(-\xi)\in\left(-\frac{1}{2},-\frac{1}{6}\right]$, then
\begin{equation}
\label{as-sol-2a}
q(x,t)=A\delta^2(\xi,0)+
t^{-\frac{1}{2}-\Im\nu(-\xi)}\alpha_2(\xi)
\exp\left\{-4it\xi^2+i\Re\nu(-\xi)\ln t\right\}
+R_1(\xi,t).
\end{equation}
\item[(b)] if $\Im\nu(-\xi)\in\left(-\frac{1}{6},\frac{1}{6}\right)$, then
\begin{align}
\label{as-sol-2b}
\nonumber
q(x,t)&=A\delta^2(\xi,0)+t^{-\frac{1}{2}+\Im\nu(-\xi)}\alpha_3(\xi)
\exp\left\{4it\xi^2-i\Re\nu(-\xi)\ln t\right\}&\\
&\quad+ 
t^{-\frac{1}{2}-\Im\nu(-\xi)}\alpha_2(\xi)
\exp\left\{-4it\xi^2+i\Re\nu(-\xi)\ln t\right\}
+R_3(\xi,t).
\end{align}
\item[(c)] if $\Im\nu(-\xi)\in\left[\frac{1}{6},\frac{1}{2}\right)$, then
\begin{equation}
\label{as-sol-2c}
q(x,t)=A\delta^2(\xi,0)+t^{-\frac{1}{2}+\Im\nu(-\xi)}\alpha_3(\xi)
\exp\left\{4it\xi^2-i\Re\nu(-\xi)\ln t\right\}+R_2(\xi,t).
\end{equation}
\end{description}
\end{description}
\end{subequations}
Here 
\[
\delta(\xi,0)=\exp\left\{\frac{1}{2\pi i}\int_{-\infty}^{-\xi}\frac{\ln(1+r_1(\zeta)r_2(\zeta))}{\zeta}\,d\zeta\right\},
\]
\[
\nu(-\xi)=-\frac{1}{2\pi}\ln|1+ r_1(-\xi)r_2(-\xi)|-
\frac{i}{2\pi}\Delta(-\xi),
\]
\[
\Delta(-\xi)=\int_{-\infty}^{-\xi}d\arg(1+ r_1(\zeta)r_2(\zeta)),
\]
\[
\alpha_1(\xi)=\begin{cases}
\dfrac{\sqrt{\pi}\,
\exp\left\{-\frac{\pi}{2}\overline{\nu}(\xi)+\frac{\pi i}{4}-2\overline{\chi}(-\xi,\xi)-3i\overline{\nu}(\xi)\ln2\right\}}
{\overline{r_2(\xi)}\Gamma(-i\overline{\nu}(\xi))}, & r_1(-\xi)r_2(-\xi)\neq0, \\
\frac{\overline{r_1(\xi)}e^{\frac{3\pi i}{4}}}{2\sqrt{\pi}}, &  r_1(-\xi)=0,  r_2(-\xi)\ne 0,\\
0, & r_1(-\xi)\ne 0,  r_2(-\xi)= 0,\\
0, &  r_1(-\xi)=r_2(-\xi)=0,
\end{cases}
\]

\[
\alpha_2(\xi)=\begin{cases}
\dfrac{c_0^2(\xi)\sqrt{\pi}\,
	\exp\left\{-\frac{\pi}{2}\nu(-\xi)+\frac{3\pi i}{4}
	-2\chi(\xi,-\xi)+3i\nu(-\xi)\ln2\right\}}
{\xi^2r_2(-\xi)\Gamma(i\nu(-\xi))}, & r_1(-\xi)r_2(-\xi)\neq0, \\
0, &  r_1(-\xi)=0, r_2(-\xi)\neq0, \\
\frac{c_0^2(\xi)r_1(-\xi)e^{\frac{\pi i}{4}}}{2\sqrt{\pi}\xi^2}, &  r_1(-\xi)\ne 0,  r_2(-\xi)= 0,\\
0, &  r_1(-\xi)=r_2(-\xi)=0,
\end{cases}
\]

\[
\alpha_3(\xi)=\begin{cases}
\dfrac{\sqrt{\pi}\,
\exp\left\{-\frac{\pi}{2}\nu(-\xi)+\frac{\pi i}{4}+2\chi(\xi,-\xi)-3i\nu(-\xi)\ln2\right\}}
{r_1(-\xi)\Gamma(-i\nu(-\xi))},  & r_1(-\xi)r_2(-\xi)\neq0, \\
\frac{r_2(-\xi)e^{\frac{3\pi i}{4}}}{2\sqrt{\pi}}, 
&  r_1(-\xi)=0, r_2(-\xi)\neq0, \\
0, & r_1(-\xi)\ne 0,  r_2(-\xi)= 0,\\
0, &  r_1(-\xi)=r_2(-\xi)=0,
\end{cases}
\]
with
\[
{\chi(\xi,k)}=-\frac{1}{2\pi i}\int_{-\infty}^{-\xi}\ln(k-\zeta)d_{\zeta}\ln(1+ r_1(\zeta)r_2(\zeta)),
\]
where $\Gamma(\cdot)$ is the Euler Gamma-function.

The error estimates
 $R_1(\xi,t)$ and $R_2(\xi,t)$
 are uniform in any compact subset of $\xi\in (0,\infty)$ and are as follows:
\begin{equation}
\label{R1}
R_1(\xi,t)=
\begin{cases}
O\left(t^{-1}\right),& \Im\nu(-\xi)>0,\\
O\left(t^{-1}\ln t\right),&\Im\nu(-\xi)=0,\\
O\left(t^{-1+2|\Im\nu(-\xi)|}\right),&\Im\nu(-\xi)<0,
\end{cases}
\end{equation}
\begin{equation}
\label{R2}
R_2(\xi,t)=
\begin{cases}
O\left(t^{-1+2|\Im\nu(-\xi)|}\right),& \Im\nu(-\xi)>0,\\
O\left(t^{-1}\ln t\right),&\Im\nu(-\xi)=0,\\
O\left(t^{-1}\right),&\Im\nu(-\xi)<0,
\end{cases}
\end{equation}
and
\begin{equation*}
R_3(\xi,t)=R_1(\xi,t) + R_2(\xi,t)=
\begin{cases}
O\left(t^{-1+2|\Im\nu(-\xi)|}\right),&\Im\nu(-\xi)\not=0,\\
O\left(t^{-1}\ln t\right),&\Im\nu(-\xi)=0.
\end{cases}
\end{equation*}
\end{theorem}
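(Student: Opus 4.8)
\textbf{Proof proposal for Theorem \ref{th1}.}

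The plan is to carry out the nonlinear steepest-descent analysis of the regular RH problem (\ref{RHR}) for $\hat M^R$, extract its asymptotics at the three relevant values of $k$ ($k=ik_1$, $k=0$, $k=\infty$), and then substitute into the representation formulas (\ref{gh}) and (\ref{sol+0}) of Proposition \ref{propsol}. Since, as noted right after Proposition \ref{propsol}, the problem for $\hat M^R$ has exactly the same structure as the RH problem for the NNLS equation on the zero background treated in \cite{RS}, the core of the argument is to transcribe that analysis, keeping careful track of the new ingredient $c_0(\xi)=\frac{A\delta^2(\xi,0)}{2i}$, the nonzero residue condition (\ref{res-convent}) at $k=0$, and the fact that the stationary phase point $k_0=-\xi$ satisfies $k_0<0$ for $x>0$ and $k_0>0$ for $x<0$. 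This sign difference is precisely what produces the dichotomy between items (i) and (ii) of the theorem.

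First I would separate the two spatial regions. For $x<0$ (so $\xi<0$, $k_0=-\xi>0$): the discrete datum $ik_1$ and the point $k=0$ both lie to the \emph{left} of the stationary phase point, so on the deformed contour $\hat\Gamma$ the residue contributions at $ik_1$ and at $k=0$ are exponentially small in $t$ (the relevant exponentials $e^{-2k_1x-4ik_1^2t}$ and $\delta^2(\xi,0)$ stay bounded, but the local parametrix sits at $k_0$ away from these points), and the leading behavior of $\hat M^R$ comes from the parabolic-cylinder parametrix at $k_0=-\xi$. Matching this parametrix as in \cite{DZ,DIZ,RS} gives $\hat M^R_1(x,t) = t^{-1/2}(\cdots) + \text{l.o.t.}$, and feeding $\hat M^{R(1)},\hat M^{R(2)}$ at $ik_1$ and $0$ (which are $\approx$ the explicit Blaschke-type values) into (\ref{gh}) and then $P_{12},P_{21}$ into (\ref{sol+0}) yields the single decaying term (\ref{as-sol-1}) with the stated $\alpha_1(\xi)$; the four cases in $\alpha_1$ correspond to $r_1(-\xi)r_2(-\xi)\neq 0$, $r_1=0$, $r_2=0$, or both, exactly as the different reductions of the parabolic-cylinder model. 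The error bound $R_1$ in (\ref{R1}) is the standard one, with the $t^{-1+2|\Im\nu|}$ term arising from the square-root (rather than $L^2$-integrable) behavior of $\delta$ at $k_0$ permitted by assumption (b).

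For $x>0$ ($\xi>0$, $k_0=-\xi<0$): now $k=0$ lies to the \emph{right} of $k_0$, so the residue at $k=0$ is \emph{not} negligible — it produces the constant background $A\delta^2(\xi,0)$ (this is exactly the heuristic computation already displayed after Proposition \ref{rough-as}), and one must compute the first correction to $\hat M^R$ at $k=0$ coming from the parametrix at $k_0$. There are two competing power-type contributions: the ``reflection'' piece at $k_0$ from the $+i0$ side and the ``transmission''-type piece relayed through the residue at $k=0$; their orders are $t^{-1/2+\Im\nu(-\xi)}$ and $t^{-1/2-\Im\nu(-\xi)}$ respectively. Depending on the sign and size of $\Im\nu(-\xi)$ relative to $\pm\frac16$ (the threshold where the subleading power crosses the $O(t^{-1+2|\Im\nu|})$ error), one, the other, or both survive in the main term — this is the origin of the three subcases (a), (b), (c) and of the error bounds $R_1,R_2,R_3$. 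The coefficients $\alpha_2(\xi)$ (carrying the factor $c_0^2(\xi)/\xi^2$ from the double passage through $k=0$) and $\alpha_3(\xi)$ come out of the same parabolic-cylinder matching.

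The main obstacle, and the step to which I would devote the most care, is the bookkeeping near $k=0$ in the $x>0$ case: one must track how the residue condition (\ref{res-convent}) interacts with the local parametrix at $k_0=-\xi$ as $\xi\downarrow 0$ (where $\delta(\xi,k)$ acquires its singularity), establish that the contributions separate cleanly into the background term plus the two power terms, and — crucially — verify that under assumption (b), $\Im\nu(-\xi)\in(-\tfrac12,\tfrac12)$, the main terms genuinely dominate the error terms $R_j$. Getting the exact constants $\alpha_2,\alpha_3$ (including the $\chi(\xi,-\xi)$ and $\ln 2$ contributions and the Gamma-function factors) requires the precise solution of the parabolic-cylinder model problem and careful sign/branch tracking in $\delta=(\xi+k)^{i\nu}e^\chi$; everything else is a routine adaptation of \cite{DZ,DIZ,RS}.
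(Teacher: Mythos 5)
Your treatment of the region $x>0$ follows the paper's route essentially verbatim (parabolic-cylinder parametrix at $k_0=-\xi$, Blaschke--Potapov reduction, the background term generated through the residue at $k=0$ via $c_0(\xi)$, two competing powers $t^{-\frac12\pm\Im\nu(-\xi)}$ with the thresholds $\pm\frac16$ coming from comparison with the $O(t^{-1+2|\Im\nu|})$ error), and the identification of the $c_0^2(\xi)/\xi^2$ factor in $\alpha_2$ is correct. The genuine gap is in your handling of $x<0$. The paper never performs a steepest-descent analysis at $\xi<0$: it analyzes the RH problem only for $\xi>0$ and obtains the $x<0$ asymptotics from the second representation in Proposition \ref{propsol}, $q(x,t)=-2k_1\overline{P_{21}(-x,t)}-2i\lim_{k\to\infty}k\overline{\hat M^{R}_{21}(-x,t,k)}$, i.e.\ from the \emph{same} analysis evaluated at the reflected point $(-x,t)$ with positive $\xi$; this is also why $\alpha_1$ appears with complex-conjugated $\nu$, $\chi$, $r_2$ and why the error is $R_1(-\xi,t)$.

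Your proposed direct analysis at $\xi<0$ would not go through as described. First, along $x=4\xi t$ with $\xi<0$ the residue coefficient at $ik_1$ satisfies $|c_1(x,t)|\sim e^{-2k_1x}=e^{8k_1|\xi|t}\to\infty$, so the claim that the contribution of $ik_1$ is exponentially small is false in that regime (one would have to invert the triangularity of the residue condition, which you do not do). Second, for $\xi<0$ the point $k=0$ lies \emph{inside} the interval $(-\infty,-\xi)$ on which the $\delta$-conjugated upper/lower factorization (\ref{tr1}) is used; hence the singularity conditions (\ref{z})/(\ref{nz}) at $k=0$, the vanishing of $1+r_1r_2$ there (Case I), and the very definition of $\delta(\xi,0)$ (now a point on the jump contour of $\delta$) all interfere with the lensing — none of this is addressed, whereas the paper's symmetry reduction avoids it entirely. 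So the $x<0$ part of your argument needs to be replaced either by the paper's reduction through (\ref{sol1})/(\ref{sol+0}) or by a substantially more careful direct analysis handling the growing residue and the $k=0$ singularity on the deformed contour.
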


\begin{remark}
Notice  that $\delta(\xi,0)\rightarrow1$ as $\xi\rightarrow\infty$ and thus 
the asymptotics
(\ref{as-sol-2a})-(\ref{as-sol-2c}) is consistent with 
the boundary conditions (\ref{2-b}).
\end{remark}

\begin{remark}
In the case of the pure-step initial data, i.e. $q(x,0)=0$ for $x<0$ and $q(x,0)=A$ for $x\geq0$, both assumptions of the theorem hold true. Moreover, in this case
$1+r_1(k)r_2(k)=\frac{4k^2}{4k^2+A^2}$ and thus $\Im\nu=0$ in (\ref{as-sol}).
\end{remark}

\begin{remark}
The problem of describing asymptotic transition between the regions
$x<0$ and $x>0$ remains open. Some observations showing that this problem is
far nontrivial   are as follows:
\begin{enumerate}
\item
From the point of view  of the Riemann-Hilbert problem formalism, 
the transition region corresponds to merging the stationary phase point
$k=-\xi$ and the singularity point $k=0$; to the best of our knowledge,
such transition picture has not been considered in the literature.
\item
The main asymptotic term for $x>0$, $A\delta^2(\xi, 0)$, 
develops, in general, increasing oscillations as $\xi\to+0$;
only in very particular cases (belonging to Case II only), 
where $b(0)=0$, there exist a finite limit of $\delta(\xi, 0)$
as $\xi\to+0$.
\item
Even in the simplest case of a soliton solution,
where the asymptotics holds as $|x|$ increases, together with $t$,
along any path in the half-planes $x>0$ and $x<0$,
(in this case we have $\nu\equiv 0$ and thus $\delta(\xi,k)\equiv 1)$,
at the boundary line $x=0$ the solution develops discrete (in $t$) singularities.
\end{enumerate}
\end{remark}

\begin{remark}
In the case of  the initial data $q_0(x)$ such that $b(0)=0$, the asymptotics of the solution 
 for a fixed $x=x_0\in\mathbb{R}$ (which corresponds to $\xi\to 0$) has the form:
\begin{equation}\label{as-soliton}
q(x_0,t)=\frac{2iAk_1^2\dot a_1(ik_1)\delta^2(0,ik_1)\exp\{2\hat\chi_1\}}
{2ik_1^2\dot a_1(ik_1)\delta^2(0,ik_1)-A\gamma_1
\exp\{-2k_1x_0-4ik_1^2t+2\hat\chi_1\}}+o(1), \quad t\to\infty
\end{equation}
with
\begin{equation}\label{chi1}
\hat\chi_1=\frac{i}{2\pi}\int_{-\infty}^{0}\ln(-\zeta)\,
d_{\zeta}\ln(1+r_1(\zeta)r_2(\zeta)),
\end{equation}
where  $x_0$ and $t$ are such that the denominator 
in (\ref{as-soliton}) 
is not equal to zero, i.e.
\begin{equation}\label{condzero}
\exp\{-2k_1x_0-4ik_1^2t\}\not=\frac{2ik_1^2}{A\gamma_1}\dot a_1(ik_1)
\exp\left\{\frac{k_1}{\pi}\int_{-\infty}^{0}\frac{\ln(1+r_1(\zeta)r_2(\zeta))}{\zeta(\zeta-ik_1)}\,d\zeta\right\}.
\end{equation}
Indeed,  the solution $\hat M^{R}$ of the regular RH problem (\ref{RHR}) has the following asymptotics for all $\xi\geq0$:
\begin{equation}\label{asM}
\hat M^{R}(x,t,k)=I+o(1),\quad t\to\infty,\quad x\geq 0.
\end{equation}
Integrating (\ref{14}) by parts in Case II (notice that $b(0)=0$ belongs to Case II),
we have that 
\begin{equation}
\delta(\xi,0)\sim\exp\left\{\frac{i}{2\pi}\ln\xi\cdot\ln a_{11}\dot a_2(0)+\hat\chi_1\right\}
\quad\text{as}\ \xi\to +0.
\end{equation}
Moreover, if $b(0)=0$, then 
 $a_{11}\dot a_2(0)=1-|b(0)|^2=1$ and thus $\delta(0,0)=e^{\hat\chi_1}$ which implies that $c_0(\xi)$ (see (\ref{c0})) is well-defined for $\xi=0$.
Consequently,  (\ref{sol+0a}) is valid for all $x\geq0$ and $t>0$ such that $P_{12}(x,t)$ and $P_{21}(x,t)$ (see (\ref{P}) and (\ref{gh})) have nonzero denominators. Evaluating $P_{12}(x,t)$ and $P_{21}(x,t)$ in (\ref{sol+0})
and using (\ref{asM}), 
we conclude that as $t\to\infty$, 
\begin{subequations}
\begin{align}
\label{soliton+0}
&q(x_0,t)=\frac{2iAk_1^2\dot a_1(ik_1)\delta^2(0,ik_1)\exp\{2\hat\chi_1\}}
{2ik_1^2\dot a_1(ik_1)\delta^2(0,ik_1)-A\gamma_1
	\exp\{-2k_1x_0-4ik_1^2t+2\hat\chi_1\}}+o(1),
\quad \text{if}\ x_0\geq 0,\\
\label{soliton-0}
&q(x_0,t)=
\frac{4k_1^2}{2ik_1^2\gamma_1\overline{\dot a_1(ik_1)}\overline{\delta^2(0,ik_1)}
\exp\{-2k_1x_0-4ik_1^2t\}+A\exp\{2\overline{\hat\chi_1}\}}+o(1),
\quad \text{if}\ x_0< 0.
\end{align}
\end{subequations}
Taking into account (\ref{k1-ngen}), (\ref{tracenga}), (\ref{14}) and 
using the equality  $1+r_1(k)r_2(k)=(1-b(k)\overline{b(-k)})^{-1}$, we have
\[
k_1\overline{\dot a_1(ik_1)}\overline{\delta^2(0,ik_1)}=
\frac{1}{k_1\dot a_1(ik_1)\delta^2(0,ik_1)}\quad 
\text{and} \quad 
\frac{A\exp\{2\overline{\hat\chi_1}\}}{2k_1}=
\frac{2k_1}{A\exp\{2\hat\chi_1\}},
\]
which implies that the formula for the principal term in (\ref{soliton-0}) 
coincides with  that in
(\ref{soliton+0}) and thus we arrive at
 (\ref{as-soliton}) for all $x_0\in\mathbb{R}$.

Finally, we notice that in the reflectionless case ($b(k)\equiv 0$),
we have $\delta\equiv 1$, $\hat\chi_1=0$, $k_1=\frac{A}{2}$, 
$\dot a_1(ik_1)=-\frac{2i}{A}$  and thus 
the principal term in  (\ref{as-soliton}) reduces 
 to the pure soliton solution (\ref{pure-sol}).
\end{remark}
\noindent\textit{Sketch of proof of Theorem \ref{th1}}.

Here we consider the case $r_j(-\xi)\neq0$, $j=1,2$ (for  the cases when one of the $r_j(-\xi)$ (or the both) equals  zero and thus $\nu(-\xi)=0$, we refer to   Section 1.5 of Chapter 2 in \cite{FIKN}). In view of (\ref{sol+0}), for obtaining the asymptotics (\ref{as-sol}) it is sufficient to estimate the solution $\hat{M}^{R}(x,t,k)$ of the regular RH problem (\ref{RHR}) at  
$k=0$, $k=ik_1$ and $k=\infty$. Noticing that this RH problem 
is similar to that in the case of decaying initial data \cite{RS},
in what follows we will refer to \cite{RS} for the details of the relative steps in the asymptotic analysis.

First, introduce the rescaled variable $z$ 
by
\begin{equation}\label{k-z}
 k=\frac{z}{\sqrt{8t}}-\xi,
\end{equation}
so that 
\[
e^{2it\theta} = e^{\frac{iz^2}{2}-4it\xi^2}.
\]

Introduce the ``local parametrix'' $\hat m_0^R(x,t,k)$
as the solution of a RH problem with the jump matrix that is a ``simplified $\hat{J}^{R}(x,t,k)$''
in the sense that in its construction, $r_j(k)$, $j=1,2$ are replaced by the constants $r_j(-\xi)$
and $\delta(\xi,k)$ is replaced by (cf. (\ref{delta-singular}))
$\delta\simeq\left(\frac{z}{\sqrt{8t}}\right)^{i\nu(-\xi)}e^{\chi(\xi,-\xi)}$. 
Such RH problem can be solved explicitly, 
in terms of the parabolic cylinder functions \cite{I1,RS}.

Indeed, $\hat m_0^R(x,t,k)$ can be determined by 
\begin{equation}\label{m0-R}
\hat m_0^R(x,t,k)=\Delta(\xi,t)m^{\Gamma}(\xi,z(k)) \Delta^{-1}(\xi,t),
\end{equation}
where 
\begin{equation}\label{Delta}
\Delta(\xi,t) = e^{(2 i t \xi^2 + \chi(\xi,-\xi))\sigma_3}(8t)^{-\frac{i\nu(-\xi)}{2}\sigma_3},
\end{equation}
$m^{\Gamma}(\xi,z)$ is determined by 
\begin{equation}\label{m-g-0}
m^{\Gamma}(\xi,z) = m_0(\xi,z) D^{-1}_{j}(\xi,z),\qquad z\in\Omega_j,\,\,j=0,\ldots,4,
\end{equation}
see Figure \ref{mod2},
where $\gamma_j$ corresponds to $\hat\gamma_j$ in accordance with (\ref{k-z}).
Here  $D_0(\xi,z)=e^{-i\frac{z^2}{4}\sigma_3}z^{i\nu(-\xi)\sigma_3}$, 
\[
\begin{aligned}
 & D_1(\xi,z)=D_0(\xi,z)
\begin{pmatrix}
1& \frac{r_2^R(-\xi)}{1+ r_1^R(-\xi)r_2^R(-\xi)}\\
0& 1\\
\end{pmatrix},
& \qquad & 
D_2(\xi,z)=D_0(\xi,z)
\begin{pmatrix}
1& 0\\
r_1^R(-\xi)& 1\\
\end{pmatrix}, \\
 & D_3(\xi,z)=D_0(\xi,z)
\begin{pmatrix}
1& - r_2^R(-\xi)\\
0& 1\\
\end{pmatrix},
& &
D_4(\xi,z)=D_0(\xi,z)
\begin{pmatrix}
1& 0\\
\frac{-r_1^R(-\xi)}{1+ r_1^R(-\xi)r_2^R(-\xi)}& 1
\end{pmatrix}
\end{aligned}
\]
with
$$
r_1^R(k)= \frac{k-ik_1}{k}r_1(k),\quad r_2^{R}(k)=\frac{k}{k-ik_1}r_2(k),
$$
and $m_0(\xi,z)$ is the solution of the following RH problem in $z$-plane 
(relative to $\mathbb R$,
 with a \emph{constant jump matrix}):
\begin{equation}\label{as8}
\begin{cases}
m_{0+}(\xi,z)=m_{0-}(\xi,z)j_0(\xi),& z\in\mathbb{R},\\
m_0(\xi,z)= \left(I+O(1/z)\right)
e^{-i\frac{z^2}{4}\sigma_3}z^{i\nu(-\xi)\sigma_3},& z\rightarrow\infty,
\end{cases}
\end{equation}
where  
\begin{equation}\label{j0}
j_0(\xi)=
\begin{pmatrix}
1+ r_1^R(-\xi)r_2^R(-\xi) & r_2^R(-\xi)\\
r_1^R(-\xi) & 1
\end{pmatrix}.
\end{equation}

\begin{figure}[ht]
\centering{\includegraphics[scale=0.1]{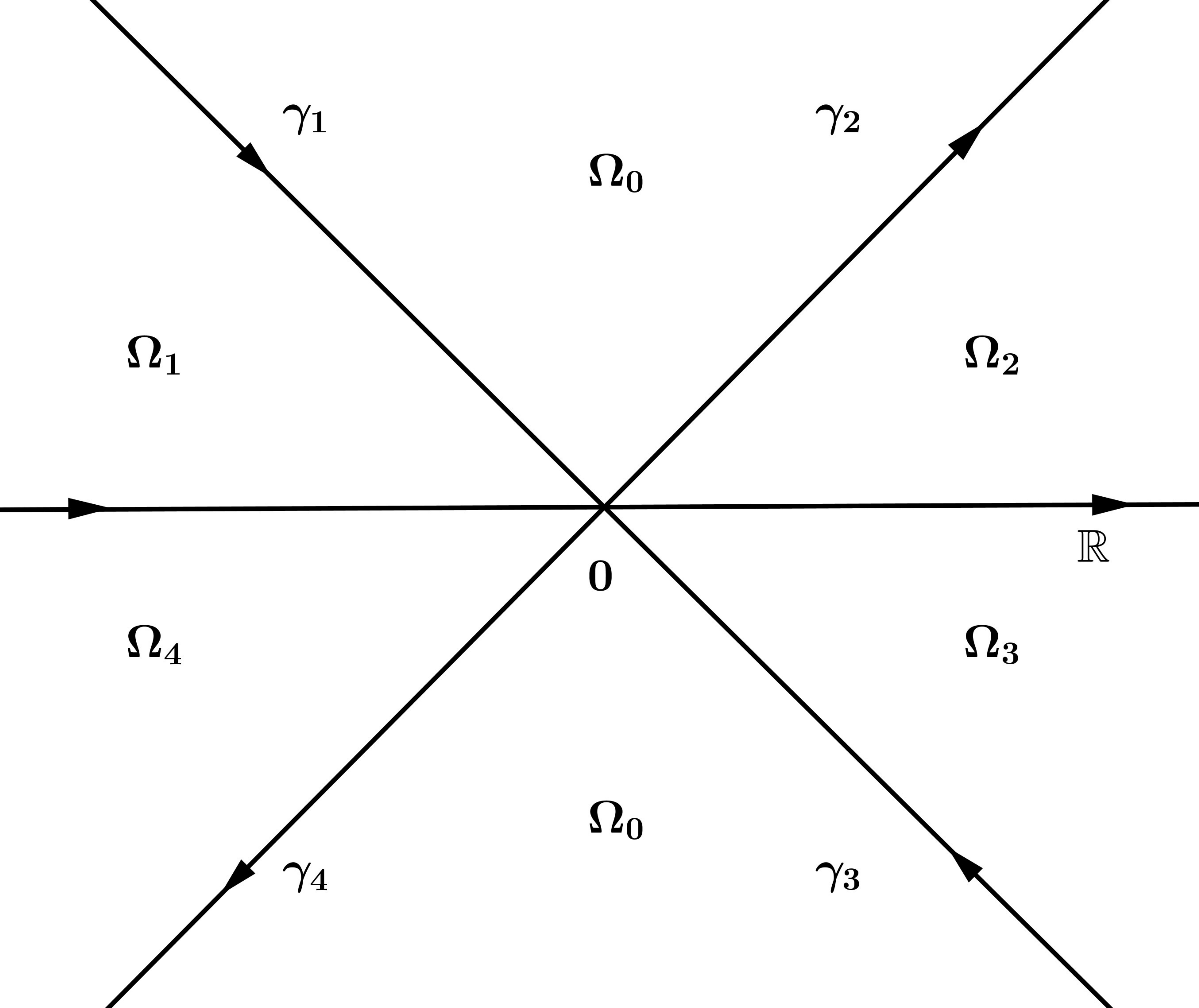}}
\caption{Contour and domains for $m^{\Gamma}(\xi,z)$ in the $z$-plane  }
\label{mod2}
\end{figure}

It is the RH problem for $m_0(\xi,z)$ that can be solved explicitly, in terms of the parabolic
cylinder functions, see, e.g., Appendix A in \cite{RS}.
Since we are interested in what happens for large $t$ and, in view of (\ref{k-z}), even finite values of $k$ correspond to large
values of $z$ if $t$ is large, it follows that all we actually need from $m_0(\xi,z)$ (and, correspondingly, 
$m^{\Gamma}(\xi,z)$) is its large-$z$ asymptotics only. The latter has  the form
\[
m^{\Gamma}(\xi,z) = I + \frac{i}{z}\begin{pmatrix}
	0 & \beta^R(\xi) \\ -\gamma^R(\xi) & 0
\end{pmatrix} + O(z^{-2}), \qquad z\to \infty,
\]
where (cf.  $\beta(\xi)$ and $\gamma(\xi)$ in \cite{RS}) 
\begin{subequations}\label{be-ga-R}
\begin{align}
\beta^R(\xi)=\dfrac{\sqrt{2\pi}e^{-\frac{\pi}{2}\nu(-\xi)}e^{-\frac{3\pi i}{4}}}{r_1^R(-\xi)
\Gamma(-i\nu(-\xi))},\\
\gamma^R(\xi)=\dfrac{\sqrt{2\pi}e^{-\frac{\pi}{2}\nu(-\xi)}e^{-\frac{\pi i}{4}}}{r_2^R(-\xi)\Gamma(i\nu(-\xi))}.
\end{align}
\end{subequations}

Now, having defined the parametrix $\hat m_0^R(x,t,k)$,
we define $\check M^{R}(x,t,k)$ (cf.  $\hat{m}(x,t,k)$ in \cite{RS}) as follows:
\[
\check M^R(x,t,k) = 
\begin{cases}
\hat M^R(x,t,k)(\hat m_0^{R})^{-1}(x,t,k), & |k+\xi|<\varepsilon, \\
\hat M^R(x,t,k), & \mbox{otherwise},
\end{cases}
\]
where $\varepsilon$ is small enough so that $|\xi|>\varepsilon$ and $|ik_1+\xi|>\varepsilon$. Then the sectionally analytic matrix $\check M^{R}$ has the following jumps across  
$\hat\Gamma_1=\hat\Gamma\cup\{|k+\xi|=\varepsilon\} $ (the circle 
$|k+\xi|=\varepsilon $ is  oriented counterclockwise)
\begin{equation}\label{check-J}
\check J^R(x,t,k) = 
\begin{cases}
\hat m_{0-}^R(x,t,k) \hat J^R(x,t,k)(\hat m_{0+}^R)^{-1} (x,t,k)
, & k\in \hat\Gamma, |k+\xi|<\varepsilon, \\
\left(\hat m_{0}^{R}\right)^{-1} (x,t,k), & |k+\xi|=\varepsilon, \\
\hat J^R(x,t,k), & \text{otherwise}.
\end{cases}
\end{equation}

The next step is the large-$t$ evaluation of  $\check M^{R}(x,t,k)$ using its
 representation  in terms of the solution of the  singular integral equation
corresponding to the RH problem determined by the jump conditions 
(\ref{check-J}) and the standard normalization condition $\check M^{R}\to I$ as $k\to\infty$.
We have
\begin{equation}\label{M-int-rep}
\check M^{R}(x,t,k) = I+\frac{1}{2\pi i}\int_{\hat\Gamma_1}\mu(x,t,s)(\check J^{R}(x,t,s)-I)
\frac{ds}{s-k},
\end{equation}
where $\mu$ solves the integral equation $\mu -C_w \mu = I$, with 
$w=\check J^{R} - I$.
Here the Cauchy-type operator $C_w$ is defined by 
$C_w f = C_-(fw)$, where 
$(C_-h)(k)$, $k\in \hat\Gamma_1$ are the right (according to the orientation of 
$\hat\Gamma_1$) non-tangential boundary values of 
\[
(Ch)(k')=\frac{1}{2\pi i}\int_{\hat\Gamma_1}\frac{h(s)}{s-k'}ds, \quad k'\in{\mathbb C}\setminus
\hat\Gamma_1.
\]

Reasoning as in \cite{RS} one can show that the main term in the large-$t$ development 
of $\check M^{R}$ in (\ref{M-int-rep}) is given by the integral along the circle
$|s+\xi|=\varepsilon$, which in turn gives
\begin{equation}\label{M^R}
\check M^R(x,t,k)=I-\frac{1}{2\pi i}\int_{|s+\xi|=\varepsilon}
\frac{\tilde{B}^{R}(\xi,t)}{(s+\xi)(s-k)}\,ds+R(\xi,t),
\quad |k+\xi|>\varepsilon,
\end{equation}
where
\begin{equation}\label{tilde-B}
\tilde{B}^R(\xi,t)=\begin{pmatrix}
0 & i\beta^R(\xi)e^{4it \xi^2 + 2\chi(\xi,-\xi)}(8t)^{-\frac{1}{2}-i\nu(-\xi)} \\
-i\gamma^R(\xi)e^{-4it \xi^2 - 2\chi(\xi,-\xi)}(8t)^{-\frac{1}{2}+i\nu(-\xi)} & 0
\end{pmatrix}
\end{equation}
and
the (matrix) error estimate $R$ has the structure 
$R(\xi,t)=\left(\begin{smallmatrix}
R_1(\xi,t) & R_2(\xi,t)\\
R_1(\xi,t) & R_2(\xi,t)
\end{smallmatrix}\right)$, with $R_1$ and $R_2$ having, in general,
different orders of decay, see (\ref{R1}) and (\ref{R2}).
Particularly, since $\check M^{R}=\hat M^{R}$ for all $k$ with
 $|k+\xi|>\varepsilon$, we have 
\begin{equation}\label{kinfty}
\lim\limits_{k\to\infty}k\left(\hat M^{R}(x,t,k)-I\right)=
\tilde{B}^{R}(\xi,t)+R(\xi,t)
\end{equation}
as well as
\begin{subequations}\label{MR0}
\begin{align}
\hat M^R(x,t,0)=I+\frac{\tilde B^{R}(\xi,t)}{\xi}+R(\xi,t),\\
\label{MRik_1}
\hat M^R(x,t,ik_1)=I+\frac{\tilde B^{R}(\xi,t)}{\xi+ik_1}+R(\xi,t).
\end{align}
\end{subequations}

Now we are at a position to evaluate 
$P_{12}(x,t)$ and $P_{21}(x,t)$ in (\ref{sol+0}).
First, we evaluate $g_j(x,t)$ and $h_j(x,t)$, $j=1,2$,  defined in (\ref{gh}),
using (\ref{MR0}) and replacing   $\hat M^{R}$ by $\check M^{R}$:
\begin{align}
\nonumber
g_1(x,t)&=ik_1+R_1(\xi,t),
&&g_2(x,t)=\frac{ik_1}{\xi+ik_1}\tilde{B}^{R}_{21}(\xi,t)+R_1(\xi,t),\\
\nonumber
h_1(x,t)&=c_0(\xi)+\frac{ik_1}{\xi}\tilde{B}^{R}_{12}(\xi,t)
+R_3(\xi,t),
&&h_2(x,t)=ik_1+\frac{c_0(\xi)}{\xi}\tilde{B}^{R}_{21}(\xi,t)
+R_3(\xi,t), 
\end{align}
where $R_3(\xi,t)=R_1(\xi,t)+R_2(\xi,t)$ 
 (we have used the standard notation for the entries of matrix $\tilde{B}^{R}(\xi,t)$). 
It follows that (we drop the arguments of the functions)
\begin{subequations}\label{gh2}
\begin{align}
g_1h_1&=ik_1c_0(\xi)-\frac{k_1^2}{\xi}\tilde{B}^{R}_{12}+R_3,
&&g_1h_2=-k_1^2+\frac{ik_1c_0(\xi)}{\xi}\tilde{B}^{R}_{21}+R_3,\\
g_2h_1&=\frac{ik_1c_0(\xi)}{\xi+ik_1}\tilde{B}^{R}_{21}+R_1,
&&g_2h_2=-\frac{k_1^2}{\xi+ik_1}\tilde{B}^{R}_{21}+R_1.
\end{align}
\end{subequations}
Substituting (\ref{gh2}) into (\ref{P}),  straightforward calculations give
\begin{subequations}\label{P-as}
\begin{align}
P_{12}(x,t)&=-\frac{ic_0(\xi)}{k_1}+\frac{\tilde{B}^R_{12}(\xi,t)}{\xi}+
\frac{ic_0(\xi)^2}
{\xi k_1(\xi+ik_1)}\tilde{B}^R_{21}(\xi,t)+R_3(\xi,t),\\
P_{21}(x,t)&=-\frac{\tilde{B}^R_{21}(\xi,t)}{\xi+ik_1}+R_1(\xi,t).
\end{align}
\end{subequations}

Notice that formulas (\ref{P-as}) involve $k_1$ explicitly. But
using 
\[
\tilde{B}^R_{12} = \tilde{B}_{12}\frac{\xi}{\xi+ik_1}, \qquad 
\tilde{B}^R_{21} = \tilde{B}_{21}\frac{\xi+ik_1}{\xi},
\]
where $\tilde{B}$ is defined similarly to $\tilde{B}^R$, 
see (\ref{be-ga-R}) and (\ref{tilde-B}),
with $r_j^R(-\xi)$ replaced by $r_j(-\xi)$, and 
substituting (\ref{kinfty}) and (\ref{P-as}) into (\ref{sol+0}),
it follows that
 the (explicit) dependence on $k_1$ in the resulting formulas for the main asymptotic terms 
vanishes, and we arrive at
the asymptotic formulas (\ref{as-sol}).


\end{document}